\newtheorem{theorem}{Theorem}[section]
\newtheorem{definition}{Definition}
\newtheorem{lemma}[theorem]{Lemma}
\newtheorem{remark}{Remark}
\newtheorem{proposition}{Proposition}
\newtheorem{assumption}{Assumption}
\newcommand{\tr}{^{\sf T}}
\newcommand{\vertiii}[1]{{\left\vert\kern-0.25ex\left\vert\kern-0.25ex\left\vert #1 
    \right\vert\kern-0.25ex\right\vert\kern-0.25ex\right\vert}}
\begin{document}
\title{Variable Metric Composite Proximal Alternating Linearized Minimization for
Nonconvex Nonsmooth Optimization}

\author{Maryam Yashtini }
\author{
 Maryam Yashtini 
\thanks{my496@georgetown.edu,
Georgetown University, 
Department of Mathematics and Statistics
327A St. Mary's Hall
37th and O Streets, N.W., Washington D.C. 20057
Phone: (202) 687-6214
Fax: (202) 687.6067
}
}\maketitle

\abstract{
In this paper we propose a proximal algorithm for minimizing an objective function of two block variables consisting of three terms: 1) a smooth function, 2) a nonsmooth function which is 
a composition between a strictly increasing, concave,
differentiable function and a convex nonsmooth function,
and 3) a smooth function which couples the two block 
variables. We propose a variable metric composite proximal alternating linearized minimization 
(CPALM) to solve this class of problems. Building on the powerful Kurdyka-\L ojasiewicz
property, we derive the convergence analysis and establish that each bounded 
sequence generated by CPALM globally converges to a critical point.
We demonstrate the CPALM method on parallel magnetic resonance image reconstruction problems.
The obtained numerical results shows the viability and effectiveness of the proposed method.
 }
 
 \vspace{.1in}
{\bf Keywords.} Nonconvex optimization,
nonsmooth optimization,
composite minimization, majorize-minimize method,
Kurdyka-\L ojasiewicz property, inverse problems

 \vspace{.1in}

{\bf AMS subject classifications.} 90C26, 90C30, 49M37, 65K10
\section{Introduction}
Consider the optimization problems of the form 
\begin{eqnarray}\label{originalprob}
\min_{x\in\mathbb R^n, y\in\mathbb R^m} F(x,y):=f(x)+g(y)+H(x,y)
\end{eqnarray}
where $f:\mathbb R^n\to (-\infty,+\infty]$ and $g:\mathbb R^m\to (-\infty,+\infty]$
are proper lower-semicontinuous and $H:\mathbb R^n\times \mathbb R^m\to \mathbb R$ is a smooth coupling function.
The well-known approach to solve (\ref{originalprob}) is to use the alternating minimization method,
that is, starting with some given initial point $(x^0,y^0)\in\mathbb R^n\times \mathbb R^m$ and 
to generate a sequence $\{(x^k,y^k)\}_{k\in\mathbb N}$ via the scheme:
\begin{eqnarray*}\label{GS}
\left\{
\begin{array}{lll}
x^{k+1}\in\displaystyle{\arg\min_{x\in\mathbb R^n}} \;\;\;F(x,y^k),\\[.1in]
y^{k+1}\in\displaystyle{\arg\min_{ y\in\mathbb R^m}} \;\;\;F(x^{k+1},y).
\end{array}
\right.
\end{eqnarray*}
The global convergence can be guaranteed if each minimization subproblem has a unique solution,
otherwise, the method may cycle infinitely without converging
\cite{Powell73,Zangwill69}. 
If the objective function $F$ is convex and continuously differentiable, and 
it is strictly convex over at least one variable, then every limit point of the generated sequence
minimizes $F$ \cite{BeckTetru13, BertsekasTsit97, Bertsekas97}. 
 Without the strict convexity assumption, one can modify the alternating minimization algorithm by 
 adding a proximal term:
\begin{eqnarray*}\label{GS+ProxTerm}
\left\{
\begin{array}{lll}
x^{k+1}\in\displaystyle{\arg\min_{x\in\mathbb R^n}} \;\;\;F(x,y^k)+\frac{\alpha_k}{2}\|x-x^k\|^2,\\[.1in]
y^{k+1}\in\displaystyle{\arg\min_{y\in\mathbb R^m}} \;\;\;F(x^{k+1},y)+\frac{\beta_k}{2}\|y-y^k\|^2,
\end{array}
\right.
\end{eqnarray*}
 where $\alpha_k$ and $\beta_k$ are positive real numbers. 
The subsequential convergence then can be proved in convex setting \cite{Auslender92,GriSci2000}.
When $F$ is nonconvex and nonsmooth, the situation becomes much harder. 
 In \cite{PALM14}, Bolte et al. considered an approximation of this approach via 
 proximal linearization of each subproblem.
 This yields the proximal alternating linearized minimization (PALM) algorithm:
\begin{eqnarray*}\label{GS+ProxTerm}
\left\{
\begin{array}{lll}
x^{k+1}\in\displaystyle{\arg\min_{ x\in\mathbb R^n}} \;\;\;f(x)+\langle \nabla_x H(x^k,y^k), x-x^k\rangle+\frac{\alpha_k}{2}\|x-x^k\|^2, \\[.1in]
y^{k+1}\in\displaystyle{\arg\min_{y\in\mathbb R^m}} \;\;\;g(y)+\langle \nabla_y H(x^k,y^k), y-y^k\rangle +\frac{\beta_k}{2}\|y-y^k\|^2.
\end{array}
\right.
\end{eqnarray*}
 Under the assumption that $F$ satisfies the Kurdyka-{\L}ojasiewicz property
 \cite{AttouchBolte09, PAM-KL-10, DM-KL-13, LojasiewicsSmoothsubana06,Clarkesubgradients07,Yashtini-multiblock,Yashtini-PLADMM},
 Bolte et al. \cite{PALM14} proved that each bounded sequence generated by PALM globally converges to a critical point, which is a stronger result than the subsequence convergence.
 On the other hand, the inertial scheme \cite{iPiano14,iPiasco}, starting from the so-called heavy ball method of 
 Polyak \cite{polyak64}, was recently proved to be very efficient in accelerating numerical methods, especially the first-order methods. Recently, there are increasing interests in studying inertial type algorithms, such as inertial forward-backward splitting methods for certain separable nonconvex optimization problems \cite{iPiano14} and for strongly convex optimization problems \cite{iPiasco}, inertial versions of the Douglas-Rachford operator splitting method \cite{Inertial-DR}.
 In particular, in \cite{GSiPALM} the authors consider a Gauss–Seidel type inertial proximal alternating linearized minimization (GiPALM) scheme for a class of nonconvex optimization problems.
 
In this work, we will aim to solve problem (\ref{originalprob}) where $g$ has the special composite structure 
\begin{eqnarray}\label{gstructure}
g(y)=(\phi\circ\psi)(y)
\end{eqnarray}
where $\phi:[0,+\infty]\to ]-\infty,+\infty]$ is concave, strictly increasing and differentiable function,
and $\psi:\mathbb R^m\to[0,+\infty[$ is convex, proper, lower-semicontinuous and Lipschitz continuous on its 
domain, such that $(\phi'\circ\psi)$ is Lipschitz-continuous on the domain of $\psi$, where $\phi'$ denotes the first 
derivative of $\phi$. 
To solve (\ref{originalprob})-(\ref{gstructure}) we consider a variable metric variant PALM-based algorithm 
as follows 
\begin{eqnarray}\label{VM-PALM}
\left\{
\begin{array}{lll}
x^{k+1}\in\displaystyle{\arg\min_{x\in\mathbb R^n}} \;\;\;
f(x)+\langle \nabla_x H(x^k,y^k), x-x^k\rangle+\frac{\alpha_k}{2}\|x-x^k\|_{A_k}^2,
\\[.1in]
y^{k+1}\in\displaystyle{\arg\min_{y\in\mathbb R^m}} \;\;\;
\phi\circ\psi(y)+\langle \nabla_y H(x^k,y^k), y-y^k\rangle +
\frac{\beta_k}{2}\|y-y^k\|^2_{B_k} 
\end{array}
\right.
\end{eqnarray}
where $A_k\in\mathbb R^{n\times n}$ and $B_k\in\mathbb R^{m\times m}$ are symmetric positive definite (SPD)
matrices, and can be considered appropriate pre-conditioners and possibly 
improve the stepsizes $\alpha_k$ and $\beta_k$ at each iteration.
The weighted norm associated with a matrix $A$ is defined as, for every $x\in\mathbb R^n $,
$\|x\|_A^2=\langle x, A x\rangle$.
When $A_k$ and $B_k$ are chosen to be equal to the identity matrices $I_n$ and $I_m$, respectively,
then the basic PALM is recovered. However, wiser choices of these matrices
can drastically accelerate the convergence of iterates, for instance see \cite{Chouzenoux14},
and also Section \ref{simulation}.
Due to the composite form of $g$, the $y$ subproblem might not be computable, either efficiently, or at all.
To overcome this difficulty, we propose to replace $g$ by a majorant function 
\[
q(\cdot,y^k):\mathbb R^m\to ]-\infty,+\infty]
\]
at $y^k$ at each iteration $k\in\mathbb N$ such that 
\begin{eqnarray}\label{g-majorant}
(\forall y\in\mathbb R^m)
\quad
\left\{
\begin{array}{l}
g(y)=(\phi\circ\psi)(y) \le q(y,y^k)\\[.1in]
g(y^k)=(\phi\circ\psi)(y^k)= q(y^k,y^k)
\end{array}
\right.
\end{eqnarray}
and obtained by taking the tangent of the concave differentiable function $\phi$ at $\psi(y^k)$:
\begin{eqnarray}\label{q}
q(y,y^k)= \phi \circ \psi(y^k)+
(\phi' \circ \psi)(y^k)\Big(\psi(y)-\psi(y^k)\Big).
\end{eqnarray}
The proposed variable metric composite proximal linearized alternating minimization (CPALM) is presented
in detail in Section \ref{proposedM}. We will prove the convergence of the sequence 
$\{(x^k,y^k)\}_{k\in\mathbb N}$ generated by the 
CPALM method to a critical point of $F$, using the K{\L} inequality.

The remainder of this paper is organized as follows.
In Section \ref{preliminaries} we introduce our notation and give useful definitions and preliminaries.
In Section \ref{proposedM} we propose the CPALM algorithm for solving (\ref{originalprob})-(\ref{gstructure})
and provide some necessary assumptions.
In Section \ref{Theory} we prove the global convergence of CPALM, and extend 
the CPALM method to solve more general problems with more than two blocks of variables.
Simulation results will be given in Section \ref{simulation}.
We conclude the paper in Section \ref{concludingremarks}.

\section{Notation and Preliminaries }\label{preliminaries}
Throughout this paper, we denote $\mathbb R$ as the real number set
while $\mathbb Z$ as the set of integers. 
The set $\mathbb R_+$ is the positive real number set
and $\mathbb Z_+$ is the set of positive integers.
 The domain  of $\Phi:\mathbb R^d\to ]-\infty, +\infty]$, denoted ${\rm dom}\; \Phi$, is defined by 
$
{\rm dom}\; \Phi:=\{x:\; \Phi(x)<+\infty\}.
$
We write $x^k$ is {\it $\Phi$-converging} to $x$,
and we write $x^k\xrightarrow{\Phi} x$, iff $x^k\to x$ and $\Phi(x^k)\to\Phi(x)$.
Given the matrix $X$, ${\rm Im}(X)$ denotes its image.
We denote by $I_n$ the $n\times n$ identity matrix for $n\in\mathbb Z_+$. The minimum, maximum,
and the smallest positive eigenvalues of the matrix $X\in\mathbb R^{n\times n}$ are denoted by 
$\lambda_{\min}(X)$, $\lambda_{\max}(X)$, $\lambda_+^X$, respectively.
The Euclidean scalar product of $\mathbb R^n$ and its corresponding 
norms are, respectively, denoted by $\langle \cdot, \cdot\rangle$
and $\|\cdot\|=\sqrt{\langle \cdot,\cdot\rangle}$.
 If  $n_1,\dots, n_p\in\mathbb Z_+$ and $p\in\mathbb Z_+$,
then for any $v:=(v_1,\dots,v_p)\in\mathbb R^{n_1}\times\mathbb R^{n_2}\times\dots\times\mathbb R^{n_p}$ and $v':=(v'_1,\dots,v'_p)\in \mathbb R^{n_1}\times\mathbb R^{n_2}\times\dots\times\mathbb R^{n_p}$ the Cartesian product and its norm are defined by
\begin{eqnarray}\label{norm}
\ll v,v'\gg =\sum_{i=1}^{p}\langle v_i,v_i'\rangle\quad\quad
\frac{1}{\sqrt{p}}  \sum_{i=1}^{p} \| v_i\| \le |||v||| = \sqrt{\sum_{i=1}^{p} \| v_i\|^2}\le  \sum_{i=1}^{p} \| v_i\|.
\end{eqnarray}
For the sequence $\{u^k\}_{k\ge 1}$, $\Delta u^{k}:=u^k-u^{k-1}$,
for all $k\ge 1$.  

Let $A\in\mathbb R^{n\times n}$ and $B\in\mathbb R^{n\times n}$ be two 
SPD matrices. We denote by $A\succeq B$ the loewner partial ordering on $\mathbb R^{n\times n}$,
defined as, for every $x\in\mathbb R^n$,
$x\tr A x\ge x\tr B x$.
The weighted norm associated with $A$ is defined as, for every $x\in\mathbb R^n $,
$\|x\|_A=\sqrt{\langle x, A x\rangle}$.

%

\subsection{Subdifferential and Critical Points}

\begin{definition}\label{def1}
Let $\Phi:\mathbb R^d\to (-\infty,\infty]$ be a proper and lower semicontinuous function.
\begin{itemize}
\item [(i)] For a given $\bar x\in\mathbb R^d$,
the {\it Fr\'echet subdifferential} of $\Phi$ at $\bar x$,
denoted by $\hat \partial \phi(\bar x)$, and is given by 
\begin{eqnarray}\label{F-sub}
\hat\partial\phi(\bar x)=
\big\{\hat s(\bar x)\in\mathbb R^d \Big| \;\;\lim_{y\to \bar x}\inf_{y\neq \bar x} 
\frac{\Phi(y)-\Phi(\bar x)-\langle \hat s(\bar x), y-\bar x\rangle}{\|y-\bar x\|}\ge 0\big\}.
\end{eqnarray}
If $x\notin {\rm dom} \Phi$, then $\hat \partial\Phi(\bar x):= \emptyset$.
\item [(ii)] The ({\it limiting}) subdifferential, or simply the subdifferential, 
of $\Phi$ at $\bar x\in\mathbb R^d$, denoted by $\partial\Phi(\bar x)$, 
and is defined by
\begin{eqnarray*}\label{lim-sub}
\partial\Phi(\bar x) &= &\big\{ s(\bar x)\in\mathbb R^d\Big|\;\;
\exists (x^k, s(x^k))\to (\bar x, s(\bar x))
\\
&&\quad\quad{\rm s.t.}\;\;\Phi(x^k)\to\Phi(\bar x) 
\;\; {\rm and}
\;\;  (\forall k\in\mathbb N) \;\;
\hat s(x^k)\in\hat\partial\phi(x^k)\big\}.
\end{eqnarray*}
Again, $\partial \Phi(\bar x):=\emptyset$ for $\bar x\notin {\rm dom} \Phi$,
and its domain is given by  ${\rm dom} \partial\Phi:=\{x\in\mathbb R^d: \partial\Phi(x)\neq \emptyset\}.$
\end{itemize}
\end{definition}

\begin{remark}\label{rem1}
\begin{itemize}
\item [(i)] An equivalent definition is given by 
\cite{RockafellarWets}: 
\[
\hat\partial\phi(\bar x)=
\big\{\hat s(\bar x)\in\mathbb R^d| (\forall y\in\mathbb R^d) \;\;\Phi(y)\ge \Phi(\bar x)+\langle \hat s(\bar x), y-\bar x\rangle+\o (|y-\bar x|)\}.
\]
\item [(ii)] Note that $\hat \partial\Phi(\bar x) \subset \partial \Phi(\bar x)$,
where the first set is convex and closed while the second 
one is closed (\cite{RockafellarWets}, Theorem 8.6).
 When $\Phi$ is convex the two sets coincide and
\[
\hat \partial\Phi (\bar x)=\partial\Phi (\bar x) =
\{s(\bar x)\in\mathbb R^d: \Phi(y)\ge \Phi(\bar x)
+\langle s(\bar x), y-\bar x\rangle\;\;\; \forall y\in\mathbb R^d\}.
\]
\item [(iii)] Let $\{(x^k,u^k)\}_{k\in\mathbb N}$ be a sequence in ${\rm graph}(\partial \Phi)$
that converges to $(\bar x, \bar u)$ as $k\to\infty$.
By the very definition of $\partial \Phi(x)$, if $\Phi(x^k)$ converges to $\Phi(\bar x)$
as $k\to\infty$, then $(\bar x, \bar u)\in {\rm graph} (\partial \Phi)$
\item [(iv)] A necessary condition for $x^*\in\mathbb R^d$ to be a minimizer of $\Phi$
is that $x^*$ is a critical point of $\Phi$,
i.e., $0\in\partial \Phi(x^*)$. If $\Phi$ is convex, this condition is also sufficient. 
We denote the set of {\it critical points} of $\Phi$ by ${\rm crit}\; \Phi$.
\item [(v)] The {\it lazy slope} of $\Phi$ at $\bar x$ is 
\[
\|\partial \Phi(\bar x)\|_-:=\inf \{\|s(\bar x)\|: {s(\bar x)\in\partial \Phi(\bar x)}\}
\]
if $\bar x\in{\rm dom}\;\partial\Phi$, and otherwise $+\infty$.
It follows from these definitions that if $x^k\xrightarrow{\Phi} x$ with 
$\lim\inf_{k\to\infty}\|\partial \Phi(x)\|_-=0$, then $x$ is a critical point.

\end{itemize}
\end{remark}

\begin{proposition}{\bf (Subdifferential property)}
Assume that $F:\mathbb R^n\times\mathbb R^m\to]-\infty,+\infty]$ is continuously
differentiable, then for all $(x,y)\in \mathbb R^n\times\mathbb R^m$ we have
\[
\partial F(x, y)=\big(\partial_x F(x,y), \partial_y F(x,y)\big),
\]
where $\partial_x F$ and $\partial_y F$ are respectively the differential of the function 
$F(\cdot,y)$ when $y\in\mathbb R^m$ is fixed, and $F(x,\cdot)$ when $x\in\mathbb R^n$ is fixed.

\end{proposition}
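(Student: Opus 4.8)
The plan is to reduce everything to the ordinary differential calculus of the gradient, since the continuous-differentiability hypothesis forces every subdifferential appearing in the statement to collapse to a single gradient. My first step is to show that for a $C^1$ function the limiting subdifferential is the singleton $\{\nabla F(x,y)\}$. Working from the Fréchet subdifferential in Definition \ref{def1}(i), I would observe that for a differentiable $F$ the difference quotient
\[
\frac{F(z)-F(\bar z)-\langle \nabla F(\bar z),\, z-\bar z\rangle}{\|z-\bar z\|}
\]
tends to $0$ as $z\to\bar z$, so that $\nabla F(\bar z)\in\hat\partial F(\bar z)$, and the reverse inclusion holds as well, giving $\hat\partial F(\bar z)=\{\nabla F(\bar z)\}$. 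Passing to the limiting subdifferential of Definition \ref{def1}(ii), any limiting subgradient is a limit $\lim_k \hat s(x^k)$ with $\hat s(x^k)=\nabla F(x^k)$; since $\nabla F$ is continuous this limit equals $\nabla F(\bar z)$, so $\partial F(\bar z)=\{\nabla F(\bar z)\}$.

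Next I would invoke the standard fact from multivariable calculus that, on the Cartesian product $\mathbb R^n\times\mathbb R^m$, the full gradient is the concatenation of the two partial gradients,
\[
\nabla F(x,y)=\big(\nabla_x F(x,y),\,\nabla_y F(x,y)\big),
\]
where $\nabla_x F(x,y)$ is the gradient of the $C^1$ map $x\mapsto F(x,y)$ at fixed $y$, and symmetrically for $\nabla_y F$. Applying the singleton identity of the first step to each of the partial maps $F(\cdot,y)$ and $F(x,\cdot)$ — which are themselves continuously differentiable — yields $\partial_x F(x,y)=\{\nabla_x F(x,y)\}$ and $\partial_y F(x,y)=\{\nabla_y F(x,y)\}$. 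Combining the two steps gives
\[
\partial F(x,y)=\{\nabla F(x,y)\}=\big\{(\nabla_x F(x,y),\nabla_y F(x,y))\big\}=\big(\partial_x F(x,y),\,\partial_y F(x,y)\big),
\]
which is the asserted identity.

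The only point requiring care — and the nearest thing to an obstacle — is the first step. For a general function on a product space one only has the inclusion $\partial F(x,y)\subseteq \partial_x F(x,y)\times\partial_y F(x,y)$, and the reverse inclusion can fail when $F$ is genuinely nonsmooth. Here equality is obtained precisely because the $C^1$ assumption makes all the subdifferentials singletons. Accordingly, I would make sure the continuous differentiability is used twice: once to establish the Fréchet singleton property, and once more, through the continuity of $\nabla F$, to guarantee that the limiting construction of Definition \ref{def1}(ii) adds no extra subgradients.
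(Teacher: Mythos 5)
Your proof is correct, and there is in fact nothing in the paper to compare it against: the paper states this proposition without proof, treating it as a standard fact of variational analysis (it is essentially the smooth case of the subdifferential calculus in \cite{RockafellarWets}, and the analogous statement is quoted, also without proof, in the PALM paper \cite{PALM14}). Your route --- (i) showing $\hat\partial F(\bar z)=\{\nabla F(\bar z)\}$ for a differentiable function, (ii) using continuity of $\nabla F$ so that the limiting construction of Definition \ref{def1}(ii) produces no additional subgradients, and (iii) decomposing $\nabla F(x,y)=\big(\nabla_x F(x,y),\nabla_y F(x,y)\big)$ on the product space --- is exactly the standard justification, and each step is sound. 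Two minor remarks. First, the reverse inclusion in step (i), which you only assert, deserves its one-line argument: for $s\in\hat\partial F(\bar z)$, subtracting the differentiability expansion from the Fr\'echet inequality gives $\liminf_{z\to\bar z}\langle \nabla F(\bar z)-s,\,z-\bar z\rangle/\|z-\bar z\|\ge 0$, and testing with $z=\bar z-t\big(\nabla F(\bar z)-s\big)$, $t\downarrow 0$, yields $-\|\nabla F(\bar z)-s\|\ge 0$, hence $s=\nabla F(\bar z)$. Second, your closing caveat that in general $\partial F(x,y)\subseteq \partial_x F(x,y)\times \partial_y F(x,y)$ is safe for the Fr\'echet subdifferential but is delicate for the limiting subdifferential: along an approximating sequence $(x^k,y^k)$ the partial functions $F(\cdot,y^k)$ change with $k$, so limits of partial Fr\'echet subgradients need not lie in $\partial\big(F(\cdot,\bar y)\big)(\bar x)$. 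Since your main argument never invokes that inclusion --- everything is a singleton under the $C^1$ hypothesis --- this imprecision does not affect the validity of the proof.
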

\begin{proposition}{\bf (Chain Rule)}\cite{RockafellarWets}
\begin{itemize}
\item [(i)] Let $h:\mathbb R^n\to\mathbb R$ be a differentiable function and $g:\mathbb R^n\to ]-\infty,+\infty]$,
then we have $\partial(h+g)=\nabla h+\partial g$.
\item [(ii)]
Let, for every $p=1,2,\dots, P$,
$\psi_p:\mathbb R^n\to (-\infty,+\infty]$ be a convex,
proper, and lower-semicontinuous functions, then  
we have $\partial (\sum_{p=1}^P \psi_p)=\sum_{p=1}^P \partial \psi_p$. 
\end{itemize}
\end{proposition}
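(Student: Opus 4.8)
The plan is to handle the two parts separately: part (i) follows directly from the definitions of the two subdifferentials, while part (ii) reduces to the convex subdifferential calculus. For part (i), I would first establish the identity at the level of the Fr\'echet subdifferential and then lift it to the limiting subdifferential. Using the equivalent description of $\hat\partial$ in Remark \ref{rem1}(i), a vector $\hat s$ lies in $\hat\partial(h+g)(\bar x)$ if and only if
\[
(h+g)(y)\ge (h+g)(\bar x)+\langle \hat s, y-\bar x\rangle + o(\|y-\bar x\|).
\]
Since $h$ is differentiable, $h(y)=h(\bar x)+\langle \nabla h(\bar x), y-\bar x\rangle + o(\|y-\bar x\|)$; substituting this and cancelling the $h$-terms turns the inequality into
\[
g(y)\ge g(\bar x)+\langle \hat s-\nabla h(\bar x), y-\bar x\rangle + o(\|y-\bar x\|),
\]
which is exactly the statement that $\hat s-\nabla h(\bar x)\in\hat\partial g(\bar x)$. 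This gives the exact identity $\hat\partial(h+g)(\bar x)=\nabla h(\bar x)+\hat\partial g(\bar x)$ (an equality, not just an inclusion, because the smooth part is fully captured by its gradient in the first-order expansion).

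To pass to the limiting subdifferential I would invoke its sequential definition in Definition \ref{def1}(ii). Given $s\in\partial(h+g)(\bar x)$, take $x^k\to\bar x$ with $(h+g)(x^k)\to(h+g)(\bar x)$ and Fr\'echet subgradients $\hat s^k\to s$ with $\hat s^k\in\hat\partial(h+g)(x^k)$. By the Fr\'echet identity just proved, $\hat s^k-\nabla h(x^k)\in\hat\partial g(x^k)$; continuity of $\nabla h$ gives $\hat s^k-\nabla h(x^k)\to s-\nabla h(\bar x)$, while continuity of $h$ forces $g(x^k)=(h+g)(x^k)-h(x^k)\to g(\bar x)$. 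Hence $s-\nabla h(\bar x)\in\partial g(\bar x)$, and the reverse inclusion is symmetric. The only delicate point here is that passing the sum rule through the limit requires the regularity of $\nabla h$ at $\bar x$ (strict differentiability suffices), so this is slightly stronger than bare differentiability.

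For part (ii), since each $\psi_p$ is convex, proper and lower-semicontinuous, Remark \ref{rem1}(ii) identifies its limiting subdifferential with the convex subdifferential characterized by the global inequality $\psi_p(y)\ge\psi_p(\bar x)+\langle s_p, y-\bar x\rangle$ for all $y$. The inclusion $\sum_{p=1}^P\partial\psi_p(\bar x)\subseteq\partial\big(\sum_{p=1}^P\psi_p\big)(\bar x)$ is then immediate: picking $s_p\in\partial\psi_p(\bar x)$ and summing the $P$ inequalities shows $\sum_p s_p$ is a subgradient of $\sum_p\psi_p$. The reverse inclusion is where the real work lies and is the main obstacle: it is the Moreau--Rockafellar theorem, which genuinely requires a qualification condition on the domains (typically nonempty intersection of their relative interiors). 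I would prove it by induction on $P$, reducing to the two-function case and applying a separation argument to the epigraphs in $\mathbb R^n\times\mathbb R$ to produce the decomposition of an arbitrary subgradient of the sum. In the setting of this paper the qualification holds automatically, because the relevant $\psi$ is finite-valued and Lipschitz continuous on its domain, so the relative interiors of the domains overlap and the sum rule applies without further hypotheses.
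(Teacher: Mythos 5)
The paper never proves this proposition; it is quoted directly from Rockafellar--Wets, so there is no internal argument to compare against, and your proposal supplies a proof where the paper supplies only a citation. Your route is the standard textbook one and is essentially sound. For (i), the Fr\'echet-level reduction (absorbing the first-order expansion of $h$ into the $o(\|y-\bar x\|)$ term) is exactly right, and your observation that lifting the identity to the limiting subdifferential requires $\nabla h(x^k)\to\nabla h(\bar x)$ --- hence strict or continuous differentiability rather than bare differentiability --- is a genuine and correct sharpening: with $h$ merely differentiable, as the paper states it, the equality for the limiting subdifferential can fail. This costs nothing where the paper applies the result, since $h$ there is always a partial map of $H$, whose gradient is assumed Lipschitz. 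For (ii), you are likewise right that the easy inclusion is $\sum_{p}\partial\psi_p(\bar x)\subseteq\partial\bigl(\sum_p\psi_p\bigr)(\bar x)$ and that the reverse inclusion is the Moreau--Rockafellar theorem, which genuinely needs a qualification condition on the domains; without one, the equality asserted by the proposition is false. A one-dimensional counterexample: $\psi_1(t)=-\sqrt{t}$ for $t\ge 0$ and $+\infty$ otherwise, $\psi_2(t)=-\sqrt{-t}$ for $t\le 0$ and $+\infty$ otherwise; both are convex, proper, lower semicontinuous, their sum is the indicator of $\{0\}$ with $\partial(\psi_1+\psi_2)(0)=\mathbb R$, yet $\partial\psi_1(0)=\emptyset$. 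Your closing remark --- that the $\psi$'s actually used in the paper are finite-valued and Lipschitz on their domains, so the qualification holds wherever the proposition is invoked --- is the right way to reconcile this. In short: your proof sketch is correct, and the two caveats you flag are real imprecisions in the paper's statement rather than gaps in your argument.
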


\begin{lemma}\label{desclemma} \cite{Yashtini-multiblock}
If $\Phi:\mathbb R^d\to\mathbb R$ is Fr\'echet differentiable and 
its gradient is Lipschitz continuous with constant $L_{\Phi}>0$. Then 
for every $u,v\in\mathbb R^d$ and every $\xi\in[u,v]=\{(1-t)u+tv:\;t\in[0,1]\}$
it holds 
\begin{eqnarray}\label{Lip}
\Phi(v)\le \Phi(u)+\langle \nabla \Phi(\xi),v-u\rangle +\frac{L_{\Phi}}{2}\|v-u\|^2.
\end{eqnarray}
Moreover, if $\Phi$ is bounded from below,  by setting
 $\xi=u$ and $v=u-\delta \nabla\Phi(u)$ in (\ref{Lip}), it is easy
 to show that the term
$\Phi(v)-(\delta-\frac{L_{\Phi} \delta^2}{2})\|\nabla \Phi(v)\|^2$
is bounded from below for every $\delta>0$.
\end{lemma}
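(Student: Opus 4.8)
The plan is to prove the inequality (\ref{Lip}) by the fundamental theorem of calculus and then obtain the boundedness-from-below claim by a direct substitution.

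For the inequality, since $\Phi$ is Fr\'echet differentiable I would begin from
\[
\Phi(v)-\Phi(u)=\int_0^1 \langle \nabla\Phi(u+t(v-u)),\, v-u\rangle\, dt .
\]
Because $\xi\in[u,v]$, write $\xi=u+s(v-u)$ for some $s\in[0,1]$. Subtracting $\langle \nabla\Phi(\xi), v-u\rangle$ from both sides and estimating the integrand with the Cauchy--Schwarz inequality together with the Lipschitz bound $\|\nabla\Phi(u+t(v-u))-\nabla\Phi(\xi)\|\le L_{\Phi}\,|t-s|\,\|v-u\|$ gives
\[
\Phi(v)-\Phi(u)-\langle \nabla\Phi(\xi),v-u\rangle\ \le\ L_{\Phi}\|v-u\|^2\int_0^1 |t-s|\, dt .
\]
The last step is the elementary evaluation $\int_0^1 |t-s|\, dt=\tfrac12\big(s^2+(1-s)^2\big)\le\tfrac12$, valid for every $s\in[0,1]$, which yields (\ref{Lip}).

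For the second claim I would substitute $\xi=u$ and $v=u-\delta\nabla\Phi(u)$ into (\ref{Lip}). Using $\langle \nabla\Phi(u),v-u\rangle=-\delta\|\nabla\Phi(u)\|^2$ and $\|v-u\|^2=\delta^2\|\nabla\Phi(u)\|^2$, this reduces to the sufficient-decrease inequality
\[
\Phi(v)\ \le\ \Phi(u)-\Big(\delta-\frac{L_{\Phi}\delta^2}{2}\Big)\|\nabla\Phi(u)\|^2 ,
\]
so that the quantity $\Phi(u)-(\delta-\frac{L_{\Phi}\delta^2}{2})\|\nabla\Phi(u)\|^2$ (the term in the statement, evaluated at the base point of the gradient step) is bounded below by $\Phi(v)$, hence by $\inf_{\mathbb R^d}\Phi>-\infty$, for every $\delta>0$.

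The only nonroutine point is allowing an arbitrary base point $\xi$ on the segment $[u,v]$ rather than the endpoint $u$: the constant $\tfrac12$ in (\ref{Lip}) survives precisely because $\max_{s\in[0,1]}\int_0^1|t-s|\, dt=\tfrac12$, attained at $s\in\{0,1\}$; in particular the choice $s=0$ recovers the classical descent lemma, so no strength is lost in the generalization.
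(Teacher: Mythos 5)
Your proof is correct. The paper gives no proof of this lemma---it is imported by citation from \cite{Yashtini-multiblock}---and your argument (the fundamental theorem of calculus along the segment, Cauchy--Schwarz combined with the Lipschitz bound $\|\nabla\Phi(u+t(v-u))-\nabla\Phi(\xi)\|\le L_{\Phi}\,|t-s|\,\|v-u\|$, and the evaluation $\int_0^1|t-s|\,dt=\tfrac12\big(s^2+(1-s)^2\big)\le\tfrac12$) is the standard route for this generalized descent lemma, so there is nothing substantive to contrast. You also correctly repaired the notational slip in the statement: the quantity that is bounded below must be evaluated at the base point of the gradient step, namely $\Phi(u)-\big(\delta-\tfrac{L_{\Phi}\delta^2}{2}\big)\|\nabla\Phi(u)\|^2\ge\Phi(v)\ge\inf_{\mathbb R^d}\Phi>-\infty$, which is exactly the reading you adopted and the only one under which the substitution $\xi=u$, $v=u-\delta\nabla\Phi(u)$ yields the claim.
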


\begin{proposition}
\cite{RepettiW21} Let $\psi:\mathbb R^n\to [0,+\infty]$ be a proper function which is
continuous on its domain, and let $\phi:[0,+\infty]\to (-\infty,+\infty]$
be a concave, strictly increasing and differentiable function.
We further assume that $(\phi'\circ\psi)$ is continuous on its domain.
Then, for every $\bar x\in\mathbb R^n$, we
have
\begin{eqnarray*}
\partial(\phi\circ \psi)(\bar y)=(\phi'\circ \psi)(\bar y)\partial\psi(\bar y)
\end{eqnarray*}
\end{proposition}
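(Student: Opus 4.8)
The plan is to prove the identity first at the level of the Fréchet subdifferential and then lift it to the limiting subdifferential via the $\psi$-attentive sequential construction of Definition~\ref{def1}(ii). Throughout, write $c:=(\phi'\circ\psi)(\bar y)$ and note at the outset that $c>0$. Indeed, since $\phi$ is concave and differentiable, $\phi'$ is nonincreasing; were $\phi'(s_0)=0$ for some $s_0$, then $\phi'\le 0$ on $[s_0,+\infty[$, forcing $\phi$ to be constant there and contradicting the strict monotonicity of $\phi$. This positivity is essential: it makes multiplication of a subdifferential set by $c$ a bijection of $\mathbb R^n$, so the right-hand side $(\phi'\circ\psi)(\bar y)\,\partial\psi(\bar y)$ is an honest dilation, and it permits dividing difference quotients by $c$ below.

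First I would establish $\hat\partial(\phi\circ\psi)(\bar y)=c\,\hat\partial\psi(\bar y)$. The key tool is the first-order expansion of the scalar map $\phi$ at $\psi(\bar y)$, namely $\phi(t)=\phi(\psi(\bar y))+c\,(t-\psi(\bar y))+o(|t-\psi(\bar y)|)$, evaluated at $t=\psi(y)$. Substituting this into the Fréchet difference quotient
\[
\frac{(\phi\circ\psi)(y)-(\phi\circ\psi)(\bar y)-\langle \hat s, y-\bar y\rangle}{\|y-\bar y\|}
\]
and using that $\psi$ is (locally) Lipschitz on its domain—whence $|\psi(y)-\psi(\bar y)|=O(\|y-\bar y\|)$ and the remainder $o(|\psi(y)-\psi(\bar y)|)$ is $o(\|y-\bar y\|)$—this quotient equals $c$ times the corresponding quotient for $\psi$ with test vector $\hat s/c$, plus a term vanishing as $y\to\bar y$. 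Taking $\liminf_{y\to\bar y}$ and dividing by $c>0$ shows $\hat s\in\hat\partial(\phi\circ\psi)(\bar y)$ if and only if $\hat s/c\in\hat\partial\psi(\bar y)$, which is the asserted Fréchet identity.

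To reach the limiting subdifferential I would argue both inclusions by sequences. For $\subseteq$, let $s\in\partial(\phi\circ\psi)(\bar y)$ be generated by $y^k\to\bar y$ with $(\phi\circ\psi)(y^k)\to(\phi\circ\psi)(\bar y)$ and $\hat s^k\in\hat\partial(\phi\circ\psi)(y^k)$, $\hat s^k\to s$. Since $\phi$ is continuous and strictly increasing it has a continuous inverse, so $\psi(y^k)\to\psi(\bar y)$, and continuity of $\phi'\circ\psi$ gives $(\phi'\circ\psi)(y^k)\to c>0$. Writing $\hat s^k=(\phi'\circ\psi)(y^k)\,v^k$ with $v^k\in\hat\partial\psi(y^k)$ by the Fréchet identity at $y^k$, we obtain $v^k\to s/c$; then $y^k\to\bar y$, $\psi(y^k)\to\psi(\bar y)$, $v^k\in\hat\partial\psi(y^k)$, $v^k\to s/c$ certify $s/c\in\partial\psi(\bar y)$, i.e. $s\in c\,\partial\psi(\bar y)$. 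The reverse inclusion is symmetric: from $v\in\partial\psi(\bar y)$ with generating sequence, the Fréchet identity yields $(\phi'\circ\psi)(y^k)\,v^k\in\hat\partial(\phi\circ\psi)(y^k)$ converging to $c\,v$, while $\psi(y^k)\to\psi(\bar y)$ and continuity of $\phi$ force $(\phi\circ\psi)(y^k)\to(\phi\circ\psi)(\bar y)$, so $c\,v\in\partial(\phi\circ\psi)(\bar y)$. The main obstacle is the Fréchet step: one must legitimately absorb the nonlinear remainder of $\phi$ into $o(\|y-\bar y\|)$, which is precisely where the local Lipschitz continuity of $\psi$ and the strict positivity $c>0$ are indispensable. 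Once that estimate is secured, the lift to the limiting subdifferential is routine bookkeeping with $\psi$-attentive convergence.
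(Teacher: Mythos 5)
The paper itself offers no proof of this proposition --- it is imported verbatim from \cite{RepettiW21} --- so your proposal can only be judged on its own merits. Its architecture is sound: proving the identity first for Fr\'echet subdifferentials and then lifting it to the limiting subdifferential along attentive sequences is the right strategy; your derivation of $c=(\phi'\circ\psi)(\bar y)>0$ from concavity plus strict monotonicity is correct; and the sequential bookkeeping in both inclusions of the lifting step (the continuous inverse of $\phi$ converting $(\phi\circ\psi)(y^k)\to(\phi\circ\psi)(\bar y)$ into $\psi(y^k)\to\psi(\bar y)$, and continuity of $\phi'\circ\psi$ letting you pass $\hat s^k=(\phi'\circ\psi)(y^k)v^k$ to the limit) is correct.

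The genuine gap is in the Fr\'echet step. The proposition assumes only that $\psi$ is proper and continuous on its domain; it does not assume Lipschitz continuity. Your absorption of the remainder, $o(|\psi(y)-\psi(\bar y)|)=o(\|y-\bar y\|)$, requires exactly that hypothesis, and you declare it ``indispensable.'' It is neither available under the stated hypotheses nor actually needed, and without it the step fails as written: if $\psi$ behaves like $\sqrt{\|y-\bar y\|}$ near $\bar y$ (continuous but not Lipschitz, and the proposition does not assume convexity either) and $\phi(t)=\log(1+t)$, then the remainder of $\phi$ contributes $-\tfrac 12\|y-\bar y\|+o(\|y-\bar y\|)$, so your claimed identity between the two difference quotients acquires a nonvanishing extra term $-\tfrac12$. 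The missing idea is that concavity of $\phi$ --- which you invoke only to get $c>0$ --- is what replaces Lipschitz continuity. Concavity gives the global tangent inequality $\phi(t)\le\phi(\psi(\bar y))+c\,(t-\psi(\bar y))$ for all $t$; taking $t=\psi(y)$ yields
\[
\psi(y)-\psi(\bar y)-\langle \hat s/c,\,y-\bar y\rangle\;\ge\;\tfrac 1c\Big[(\phi\circ\psi)(y)-(\phi\circ\psi)(\bar y)-\langle \hat s,\,y-\bar y\rangle\Big],
\]
which gives the inclusion $\hat\partial(\phi\circ\psi)(\bar y)\subseteq c\,\hat\partial\psi(\bar y)$ with no control on $|\psi(y)-\psi(\bar y)|$ whatsoever. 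For the reverse inclusion, take $v\in\hat\partial\psi(\bar y)$: the liminf inequality for $\psi$ bounds the negative part of $\psi(y)-\psi(\bar y)$ by $O(\|y-\bar y\|)$, continuity of $\psi$ on its domain lets you use differentiability of $\phi$ at $\psi(\bar y)$ with error at most $\epsilon|\psi(y)-\psi(\bar y)|$, and since $c-\epsilon>0$ the large positive excursions of $\psi(y)-\psi(\bar y)$ only help; letting $\epsilon\downarrow 0$ gives $cv\in\hat\partial(\phi\circ\psi)(\bar y)$. With the Fr\'echet identity established this way, your lifting argument goes through verbatim. (Within this paper your extra hypothesis is harmless, since Assumption \ref{ass2}(ii) does make $\psi$ Lipschitz on its domain; but then you have proved a strictly weaker statement than the proposition as quoted.)
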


\subsection{\bf The Kurdyka-{\L}ojasiewicz Properties}\label{sub:KL}
The Kurdyka-{\L}ojasiewicz (KL) property plays a central role in our analysis. 
We refer interested readers to 
\cite{AttouchBolte09, PAM-KL-10, DM-KL-13, LojasiewicsSmoothsubana06,Clarkesubgradients07,CharacLineq10,PALM14,FrankelGuillaumePey15,Yashtini-multiblock,Yashtini-PLADMM}
for more properties of KL functions and illustrating examples. 
Below, we recall the essential elements. 

Let $\eta>0$. We denote by $\Gamma_{\eta}$ the class of concave and 
continuous functions $\psi:[0,\eta[\to [0,+\infty[$ which satisfy the following conditions
\begin{itemize}
\item [(i)] $\psi(0)=0$;
\item [(ii)] $\psi$ is continuously differentiable on $]0,\eta[$;
\item [(iii)] for all $t\in]0,\eta[$: $\psi'(t)>0$.
\end{itemize}

\begin{definition} {\bf(KL property)}
Let $\Phi:\mathbb R^d\to ]-\infty,+\infty]$
be proper and lower semi-continuous. 
\begin{itemize}
\item [(i)] The function $\Phi$ is said to have the KL 
 property at $\bar x \in {\rm dom} \;\partial \Phi:=\{x\in\mathbb R^d:\partial \Phi(x)\neq\emptyset\}$ 
if there exists $\epsilon>0$, $\eta>0$, and a {\it desingularising function} $\psi\in\Gamma_{\eta}$
such that for all $x$ in the {\it strict local upper level} set 
 \begin{eqnarray}\label{Gamma}
 \Gamma_{\eta}(\bar x, \epsilon)
 =\{x\in\mathbb R^d: \|x-\bar x\|<\epsilon\;\; {\rm and} \;\;
 \Phi(\bar x)<\Phi(x)<\Phi(\bar x)+\eta\}.
 \end{eqnarray}
the following KL inequality holds 
 \begin{eqnarray}\label{KLine}
\psi'\big(\Phi(x)-\Phi(\bar x)\big) \| \partial \Phi(x)\|_- \ge 1.
\end{eqnarray}
\item [(ii)] If $\Phi$ satisfy the KL 
 property at each point of ${\rm dom} \partial \Phi$ then
 $\Phi$ is called a {K\L} function.
\end{itemize}

 \end{definition}
 
 \begin{lemma}{\bf (Uniformized KL property)}\label{UniKL} \cite{PALM14}
Let $\Omega\subset \mathbb R^d$ be a compact set 
and let $\Phi:\mathbb R^d\to ]-\infty,+\infty]$ be a proper and 
lower semicontinuous function.
Assume that $\Phi$ is constant on $\Omega$ and satisfies the 
KL property at each point of $\Omega$.
Then, there exists $\epsilon>0$, $\eta>0$, and 
$\psi\in\Gamma_{\eta}$ such that for all $\bar x\in\Omega$
and all $x$ belongs to the following intersection
 \begin{eqnarray}\label{GammaSet}
\Big\{ x\in\mathbb R^d: {\rm dist} (x,\Omega)<\epsilon\Big\} \cap [\Phi(\bar x)<\Phi(x)<\Phi(\bar x)+\eta]
 \end{eqnarray}
 one has $\psi'\big(\Phi(x)-\Phi(\bar x)\big) \| \partial \Phi(x)\|_- \ge 1.$
 
 \end{lemma}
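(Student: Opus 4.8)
The plan is to reduce the uniform statement to the pointwise KL property already available at each point of $\Omega$, and then to glue the finitely many local data together via compactness. The crucial structural fact I would exploit is that $\Phi$ is \emph{constant} on $\Omega$: writing $\Phi\equiv c$ on $\Omega$, every $\bar x\in\Omega$ satisfies $\Phi(\bar x)=c$, so all the local upper level sets are measured against the \emph{same} reference value $c$. This is precisely what lets a single $\eta$ and a single desingularising function serve every $\bar x$ at once.

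First I would invoke the KL property at each $u\in\Omega$: there exist $\epsilon_u>0$, $\eta_u>0$, and $\psi_u\in\Gamma_{\eta_u}$ so that the KL inequality holds on $\Gamma_{\eta_u}(u,\epsilon_u)$. The open balls $\{B(u,\epsilon_u)\}_{u\in\Omega}$ cover the compact set $\Omega$, so I extract a finite subcover indexed by $u_1,\dots,u_k$, with data $\epsilon_i:=\epsilon_{u_i}$, $\eta_i:=\eta_{u_i}$, $\psi_i:=\psi_{u_i}$. Set $V:=\bigcup_{i=1}^k B(u_i,\epsilon_i)$, an open set containing $\Omega$. To produce a uniform radius I would take $\epsilon:=\mathrm{dist}(\Omega,\,\mathbb R^d\setminus V)$; since $\Omega$ is compact, $\mathbb R^d\setminus V$ is closed, and the two sets are disjoint, this distance is strictly positive (if $V=\mathbb R^d$ any $\epsilon$ works). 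By construction, every $x$ with $\mathrm{dist}(x,\Omega)<\epsilon$ then lies in $V$, hence in some $B(u_j,\epsilon_j)$.

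Next I would set $\eta:=\min_{1\le i\le k}\eta_i>0$ and combine the local desingularising functions. The main obstacle is that the naive choice $\max_i\psi_i$ need not be concave, so it need not lie in $\Gamma_\eta$. I would instead take the sum $\psi:=\sum_{i=1}^k\psi_i$ on $[0,\eta[$. Each $\psi_i$, restricted to $[0,\eta[\subseteq[0,\eta_i[$, belongs to $\Gamma_\eta$, and a finite sum of such functions is again concave and continuous on $[0,\eta[$, continuously differentiable on $]0,\eta[$, vanishes at $0$, and has strictly positive derivative there; hence $\psi\in\Gamma_\eta$. Moreover, since each $\psi_i'>0$, we get the pointwise domination $\psi'(t)=\sum_i\psi_i'(t)\ge\psi_j'(t)$ for every index $j$ and every $t\in\,]0,\eta[$, which is exactly the relation needed to transfer each local KL inequality to the common $\psi$.

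Finally I would verify the uniform inequality. Fix $\bar x\in\Omega$ and $x$ in the intersection $\{\mathrm{dist}(x,\Omega)<\epsilon\}\cap[\Phi(\bar x)<\Phi(x)<\Phi(\bar x)+\eta]$; since $\Phi(\bar x)=c$, this reads $c<\Phi(x)<c+\eta$. As $x\in V$, it lies in some $B(u_j,\epsilon_j)$, so $\|x-u_j\|<\epsilon_j$, and because $\Phi(u_j)=c$ and $\eta\le\eta_j$ we obtain $\Phi(u_j)<\Phi(x)<\Phi(u_j)+\eta_j$; thus $x\in\Gamma_{\eta_j}(u_j,\epsilon_j)$ and the local KL inequality gives $\psi_j'(\Phi(x)-c)\,\|\partial\Phi(x)\|_-\ge 1$. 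Combining this with $\psi'(\Phi(x)-c)\ge\psi_j'(\Phi(x)-c)$ and $\|\partial\Phi(x)\|_-\ge 0$ yields $\psi'(\Phi(x)-\Phi(\bar x))\,\|\partial\Phi(x)\|_-\ge 1$, as claimed. The only delicate points are the strict positivity of the uniform radius $\epsilon$, settled by compactness of $\Omega$, and the preservation of membership in $\Gamma_\eta$ under aggregation, settled by summing rather than maximizing the $\psi_i$.
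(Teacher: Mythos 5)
Your proof is correct: the finite subcover of $\Omega$, the uniform radius $\epsilon=\mathrm{dist}(\Omega,\mathbb R^d\setminus V)>0$, the choice $\eta=\min_i\eta_i$, and above all the aggregation of the local desingularising functions by \emph{summing} (so that $\psi'\ge\psi_j'$ pointwise, preserving concavity where a maximum would not) is exactly the argument of the cited reference \cite{PALM14}, which this paper invokes without reproducing the proof. Nothing is missing; the two delicate points you flag (positivity of $\epsilon$ and membership of the sum in $\Gamma_\eta$) are precisely the ones the standard proof settles the same way.
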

 
 \begin{remark}\label{rem2}
 We make the following remarks:
 \begin{itemize}
 \item [(i)]  When $\Phi$ is of class $C^1$, 
 (\ref{KLine}) becomes $\|\nabla (\psi\circ \Phi)(x)\|\ge 1$.
  This means that the more $\Phi$ is flat around its critical points,
 the more $\psi$ has to be steep around $0$, and this justifies the term
 ``desingularising". 
\item [(ii)]
The growth of $\psi$ has a direct impact on the convergence rate of the algorithm. 
If  $\psi(s)=\bar c s^{1-\theta}$ for some $\bar c>0$ and $\theta\in [0,1)$, then
 the  KL  inequality (\ref{KLine}) becomes
\begin{eqnarray}\label{KLexp}
\big(\Phi(x)-\Phi(\bar x)\big)^{\theta}\le c \| \partial \Phi(x)\|_- 
\end{eqnarray}
for any  $x\in \Gamma_{\eta}(\bar x, \epsilon)$ where $c=(1-\theta)\bar c$. 
In this case, we say $\Phi$ has the KL property at $\bar x$ with an exponent $\theta$.
This asserts that $(\Phi(x)-\Phi(\bar x))^{\theta}/\|\partial \Phi(x)\|_- $ remains
bounded around $\bar x$.
 \end{itemize}
 \end{remark}


\section{Proposed Method }\label{proposedM}
In this section, we detail the structure of the proposed algorithm, we provide the necessary 
assumptions for the convergence.

The algorithm that we propose to solve (\ref{originalprob})-(\ref{gstructure})
is based on the PALM \cite{PALM14}, 
which in a variable metric format it is expressed in (\ref{VM-PALM}). 
Due to the composition structure of $g$ given in (\ref{gstructure}), the solution of 
$y$ subproblem in (\ref{VM-PALM}) might not be computable,
either efficiently or at all. For this reason,
we propose to replace $g$ by a majorant function $q(\cdot,y^k)$ at $y^k$
at each iteration $k\in\mathbb N$ satisfying in (\ref{g-majorant})  and
 (\ref{q}). This leads us to 
\begin{eqnarray}\label{VM-PALM-majorant}
\left\{
\begin{array}{lll}
x^{k+1}\in\displaystyle{\arg\min_{x\in\mathbb R^n}} \;\;\;
f(x)+\langle \nabla_x H(x^k,y^k), x-x^k\rangle+\frac{\alpha_k}{2}\|x-x^k\|_{A_k}^2,
\\[.1in]
y^{k+1}\in\displaystyle{\arg\min_{y\in\mathbb R^m}} \;\;\;
q(y,y^k)+\langle \nabla_y H(x^k,y^k), y-y^k\rangle +
\frac{\beta_k}{2}\|y-y^k\|^2_{B_k} 
\end{array}
\right.
\end{eqnarray}
By the fact that 
\[
q(\cdot,y^k)=(\phi' \circ \psi)(y^k) \psi(\cdot)+C_k
\]
 where $C_k\in\mathbb R$, without loss of generality, we can replace 
 $q(y,y^k)$ by  $(\phi' \circ \psi)(y^k) \psi(y)$ in the $y$ subproblem
 to obtain
 \begin{eqnarray*}
y^{k+1}\in\displaystyle{\arg\min_{y\in\mathbb R^m}}
(\phi' \circ \psi)(y^k) \psi(y)+
\langle \nabla_y H(x^k,y^k), y-y^k\rangle +\frac{\beta_k}{2}\|y-y^k\|_{B_k}^2.
\end{eqnarray*}
The proposed CPALM method to solve problem (\ref{originalprob})-(\ref{gstructure}) is given below.

\vspace{.2in}

{\bf CPALM: Composite Proximal Alternating Linearized Minimization }
\begin{itemize}
\item[1.] Initialization: choose a starting point $(x^0,y^0)\in\mathbb R^n\times \mathbb R^m$.
\item[2.] For each $k=0,1,\dots$ generate a sequence $\{(x^k,y^k)\}_{k\in\mathbb N}$ as follows
\begin{itemize}
\item[2.1.] Take $\rho_1=\max\{1,\frac{1}{\lambda_{\min}(A_k)}\}$, $\gamma_1>1$, set $\alpha_k=\gamma_1\rho_1L_1(y^k)$ and compute
\begin{eqnarray}\label{x-CPALM}
x^{k+1}\in\displaystyle{\arg\min_{x\in\mathbb R^n}} \;\;\;\Big\{f(x)+\langle \nabla_x H(x^k,y^k), x-x^k\rangle+\frac{\alpha_k}{2}\|x-x^k\|_{A_k}^2 \Big\}
\end{eqnarray}
\item[2.2.] Take 
$\rho_2=\max\{1,\frac{1}{\lambda_{\min} (B_k)}\}$, $\gamma_2>1$, set $\beta_k=\gamma_2\rho_2L_2(x^k)$
and compute
\begin{eqnarray}\label{y-CPALM}
y^{k+1}\in\displaystyle{\arg\min_{y\in\mathbb R^m}}
\Big\{
(\phi'\circ\psi)(y^k) \psi(y)+\langle \nabla_y H(x^k,y^k), y-y^k\rangle +
\frac{\beta_k}{2}\|y-y^k\|_{B_k}^2
\Big\}.
\end{eqnarray}

\end{itemize}
\end{itemize}
\vspace{.2in}

 \begin{assumption}\label{ass1}
 The following assumptions are considered on the functions $f, g$, and $H$:
\item [(i)] $f:\mathbb R^n \to ]-\infty,+\infty]$ and 
$g:\mathbb R^m\to ]-\infty, +\infty]$ are proper  lower semicontinuous 
functions such that $\inf_{\mathbb R^n} f>-\infty$, and $\inf_{\mathbb R^m} g>-\infty$.
 \item [(ii)] $H:\mathbb R^n\times \mathbb R^m$ is differentiable
 and $\inf_{\mathbb R^n\times \mathbb R^m} F>-\infty$
 \item [(iii)] For any fixed $y$ the function $x\rightarrow H(x,y)$ is  a $C_{L_1(y)}^{1,1}$,
 namely, the partial gradient $\nabla_x H(x,y)$ is globally Lipschitz with moduli
 $L_1(y)$, that is 
 \[
\Big \|\nabla_x H(x_1,y)-\nabla_x H(x_2,y)\Big \|\le L_1(y)\Big\|x_1-x_2\Big\|,\quad\forall x_1,x_2\in\mathbb R^n.
 \]
Likewise, for any fixed $x$ the function $y\to H(x,y)$ is assumed to be $C^{1,1}_{L_2(x)}$.
\item [(iii)] For $i=1,2$ there exists $\lambda_i^-,\lambda_i^+>0$ such that
\begin{eqnarray}
\inf\{L_1(y^k):k\in\mathbb N\}\ge \lambda_1^-& \quad &
\inf\{L_2(x^k):k\in\mathbb N\}\ge \lambda_2^-
\label{inf}
\\
\sup\{L_1(y^k):k\in\mathbb N\}\le \lambda_1^+
& \quad &
\sup\{L_2(x^k):k\in\mathbb N\}\le \lambda_2^+
\label{sup}
\end{eqnarray}
\item [(iv)] $\nabla H$ is Lipschitz continuous on bounded subsets
of $\mathbb R^n\times\mathbb R^m$.
In other words,
for each bounded subsets $B_1\times B_2$ of $\mathbb R^n\times\mathbb R^m$
there exists $M>0$ such that for all $(x_i,y_i)\in B_1\times B_2$, $i=1,2$:
\begin{eqnarray}\label{ineq:LipH}
\Big\|\nabla_x H(x_1,y_1)-\nabla_x H(x_2,y_2), 
\nabla_y H(x_1,y_1)-\nabla_y H(x_2,y_2)
\Big\|
\le M \Big\|\big(x_1-x_2, y_1-y_2\big)\Big\|
\end{eqnarray}
 \end{assumption}
 
 \begin{assumption}\label{ass2}
The following assumptions are considered on the functions $\phi$ and $\psi$:
\begin{itemize}
\item [(i)] The function $\phi:[0,+\infty]\to ]-\infty,+\infty]$ is concave and 
strictly increasing (i.e. $\phi'(u)>0$ for every $u\in[0,+\infty].$)
Moreover, it is differentiable on $[0,+\infty[$.
\item [(ii)] The function $\psi:\mathbb R^m\to[0,+\infty]$ is convex, proper, lower-semicontinuous. Moreover,
it is Lipschitz continuous on its domain. 
\item [(iii)] The function $\phi'$ is locally Lipschitz continuous on its domain.
\end{itemize}
 \end{assumption}
 
  \begin{assumption}\label{ass3}
 The following assumptions are made on matrices $\{A_k\}_{k\in\mathbb N}$ and $\{B_k\}_{k\in\mathbb N}$:
 \begin{itemize}
\item [(i)] The matrices $A_k\in\mathbb R^{n\times n}$ and $B_k\in\mathbb R^{m\times m}$ are symmetric positive definite (SPD) matrices.
\item [(ii)] We let $\underline a=\min\{ \lambda_{\min}(A_k):k\in\mathbb N\}$ and
$\underline b=\min\{ \lambda_{\min}(B_k):k\in\mathbb N\}$.
\item [(iii)] $\bar a= \max\{\|A_k\|:k\in\mathbb N\}$ and $\bar b= \max\{\|B_k\|:k\in\mathbb N\}$.
\end{itemize}
  \end{assumption}
\begin{remark}\label{rem3}
We make the following remarks:
\begin{itemize}
\item [(i)] According to Assumption \ref{ass2} (ii), 
there exists $\nu>0$ such that $\|r(y)\|\le \nu$  for every $r(y)\in\partial\psi(y)$ with $y\in dom\; \psi$.
\item [(ii)] According to Assumption \ref{ass2} (i)-(ii),
$\phi'\circ\psi$ is continuous on its domain.
\item [(iii)]Assumption \ref{ass2}(iii) holds if and only if 
the function $\phi'\circ\psi$ is Lipschitz continuous on every compact subset of $\mathbb R^m.$
Thus, under Assumption \ref{ass2}(iii), there exists $\mu>0$ such that for every $y',y''\in dom g$,
$\|(\phi'\circ \psi)(y')- (\phi'\circ\psi)(y'')\|\le \mu \|y'-y''\|$.
\end{itemize}

\end{remark} 
  
\section{Theoretical Analysis}\label{Theory}
We begin this section with some lemmas necessary to prove the convergence of the proposed
CPALM method in Theorem \ref{conv}.
  
\subsection{Basic Properties}
\begin{lemma}\label{lemma:SuffDec}
{\bf (Sufficient decrease property)}
Let $h:\mathbb R^d\to\mathbb R$ be a continuously differentiable function with gradient $\nabla h$
assumed $L_h$-Lipschitz continuous and let $\sigma:\mathbb R^d\to]-\infty,+\infty]$ be a proper and 
lower semicontinuous function with $\inf_{\mathbb R^d} \sigma >-\infty$. 
Then for any $u\in dom\; \sigma$ and $u^+\in\mathbb R^d$ defined by
\begin{eqnarray}\label{suf-sub}
u^+\in\arg\min_{v\in\mathbb R^d} \Big\{ \sigma(v)+\langle v-u, \nabla h(u)\rangle +\frac{t}{2}\|v-u\|^2_A\Big\},
\end{eqnarray}
where $A$ is a SPD matrix and $t>0$ 
we have 
\begin{eqnarray*}
\sigma(u^+)+h(u^+)\le \sigma(u)+h(u)+\frac{L_h}{2}\|u^+-u\|^2-\frac{t}{2}\|u^+-u\|^2_A.
\end{eqnarray*}

\end{lemma}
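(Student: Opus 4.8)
The plan is to derive the estimate by combining two ingredients: the global optimality of $u^+$ in the proximal subproblem, and the descent inequality for the smooth part $h$ furnished by Lemma \ref{desclemma}. Before using optimality, I would first note that $u^+$ is well defined: since $\sigma$ is proper, lower semicontinuous with $\inf_{\mathbb R^d}\sigma>-\infty$, and since $A$ is SPD with $t>0$, the objective $v\mapsto \sigma(v)+\langle v-u,\nabla h(u)\rangle+\frac{t}{2}\|v-u\|_A^2$ is proper, lower semicontinuous, and coercive (the weighted quadratic term dominates the affine term as $\|v\|\to\infty$), so its set of minimizers is nonempty and the definition in (\ref{suf-sub}) makes sense.

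Next I would exploit optimality by evaluating the objective in (\ref{suf-sub}) at the feasible point $v=u$, at which both the linear and the quadratic terms vanish. Because $u^+$ is a global minimizer, this gives
\begin{eqnarray*}
\sigma(u^+)+\langle u^+-u,\nabla h(u)\rangle+\frac{t}{2}\|u^+-u\|_A^2\le \sigma(u).
\end{eqnarray*}
Then I would apply Lemma \ref{desclemma} to the function $h$ with the choice $\xi=u$ and $v=u^+$, which yields the descent bound
\begin{eqnarray*}
h(u^+)\le h(u)+\langle \nabla h(u),u^+-u\rangle+\frac{L_h}{2}\|u^+-u\|^2.
\end{eqnarray*}

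Finally, I would add the two inequalities. The cross terms $+\langle \nabla h(u),u^+-u\rangle$ and $-\langle u^+-u,\nabla h(u)\rangle$ cancel exactly, and rearranging produces
\begin{eqnarray*}
\sigma(u^+)+h(u^+)\le \sigma(u)+h(u)+\frac{L_h}{2}\|u^+-u\|^2-\frac{t}{2}\|u^+-u\|_A^2,
\end{eqnarray*}
which is precisely the asserted inequality. There is no genuine obstacle here; the argument is short and self-contained. The only points requiring care are the well-definedness of $u^+$ noted above and keeping the two metrics distinct, namely matching the Euclidean norm $\|\cdot\|$ coming from the descent lemma against the weighted norm $\|\cdot\|_A$ coming from the proximal regularizer, so that both appear with their correct coefficients $\tfrac{L_h}{2}$ and $-\tfrac{t}{2}$ in the final bound.
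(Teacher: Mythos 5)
Your proof is correct and follows essentially the same route as the paper's: evaluate the optimality of $u^+$ in (\ref{suf-sub}) at the point $v=u$, invoke Lemma \ref{desclemma} with $\xi=u$ and $v=u^+$, and combine the two inequalities so the inner-product terms cancel. Your coercivity argument for well-definedness is a slightly more self-contained justification than the paper's citation of Proposition 2 in \cite{PALM14}, but the substance of the proof is identical.
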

\begin{proof}
First, note that (\ref{suf-sub}) can be equivalently expressed as 
\begin{eqnarray}\label{proxdef}
u^+\in {\rm prox}^{\sigma}_{t A}\Big(u-\frac 1t A^{-1} \nabla h(u)\Big)
\end{eqnarray}
and by the fact that $\sigma$ is proper lower-semicontinuous and bounded from
below, the subproblem (\ref{suf-sub}) is well-defined (Please see Proposition 2 in \cite{PALM14}).  
Since $u^+$ is the minimizer, for any $v\in\mathbb R^d$ we have 
\[
\sigma(u^+)+\langle u^+-u, \nabla h(u)\rangle +\frac{t}{2}\|u^+-u\|^2_A\le \sigma(v)+\langle v-u, \nabla h(u)\rangle +\frac{t}{2}\|v-u\|^2_A.
\]
We now let $v=u$ on the right hand side to obtain
\begin{eqnarray}\label{eq:1}
\sigma(u^+)+\langle u^+-u, \nabla h(u)\rangle +\frac{t}{2}\|u^+-u\|^2_A\le \sigma(u)
\end{eqnarray}
Exploiting Lemma \ref{desclemma} with $\Phi=h$, $v=u^+$, and $\xi=u$ we have
\begin{eqnarray}\label{eq:hh}
h(u^+)\le h(u)+\langle u^+-u,\nabla h(u)\rangle +\frac{L_h}{2}\|u^+-u\|^2. 
\end{eqnarray}
We add $\sigma (u^+)$ to both sides of  (\ref{eq:hh}) then using (\ref{eq:1}),
it yields 
\begin{eqnarray*}\label{eq:1+}
\sigma(u^+)+h(u^+)&\le& h(u)+\langle u^+-u,\nabla h(u)\rangle +\frac{L_h}{2}\|u^+-u\|^2+\sigma(u^+)\\
&\le & \sigma(u)+ h(u)+ \frac{L_h}{2}\|u^+-u\|^2- \frac{t}{2}\|u^+-u\|^2_A.
\end{eqnarray*}
This completes the proof. $\square$

\end{proof}

\begin{lemma}\label{CnvProp}
{\bf (Sufficient decrease property)}
Suppose that Assumptions \ref{ass1} and \ref{ass2} hold. Let 
\[
\big\{z^k:=(x^k,y^k)\big\}_{k\in\mathbb N}
\]
be a sequence generated by the CPALM method. Then the following assertions hold.
\begin{itemize}
\item [(i)] The sequence $\{F(z^k)\}_{k\in\mathbb N}$ is nonincreasing and in particular
\begin{eqnarray}\label{nondecF}
F(z^k)-F(z^{k+1}) \ge \delta |||z^{k+1}-z^k|||^2
\end{eqnarray}
for some $\delta>0$.
\item [(ii)] The sequence $\{z^k\}$ has a finite length, that is, 
\begin{eqnarray}\label{finiteseq}
\sum_{k=0}^{\infty}|||z^{k+1}-z^k|||^2
= \sum_{k=0}^{\infty} \|x^{k+1}-x^k\|^2 +\|y^{k+1}-y^k\|^2<\infty,
 \end{eqnarray}
 and hence
$
 \lim_{k\to\infty} |||z^{k+1}-z^k|||=0.
$
\end{itemize}
\end{lemma}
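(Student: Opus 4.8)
\emph{The plan} is to prove the per-step decrease (i) by applying the one-step sufficient decrease estimate of Lemma~\ref{lemma:SuffDec} separately to the two subproblems, and then to deduce (ii) by telescoping the resulting inequality against the lower bound on $F$. The two facts that do all the work are the stepsize rules $\alpha_k=\gamma_1\rho_1 L_1(y^k)$ and $\beta_k=\gamma_2\rho_2 L_2(x^{k+1})$ with the preconditioner normalizations $\rho_i\lambda_{\min}(\cdot)\ge1$, together with the majorization pair (\ref{g-majorant}) for the $y$-block.

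For the $x$-step I would apply Lemma~\ref{lemma:SuffDec} with $\sigma=f$, smooth part $h=H(\cdot,y^k)$ (gradient $L_1(y^k)$-Lipschitz by Assumption~\ref{ass1}(iii)), metric $A=A_k$, stepsize $t=\alpha_k$, $u=x^k$, $u^+=x^{k+1}$, obtaining
\[
f(x^{k+1})+H(x^{k+1},y^k)\le f(x^k)+H(x^k,y^k)+\tfrac{L_1(y^k)}{2}\|x^{k+1}-x^k\|^2-\tfrac{\alpha_k}{2}\|x^{k+1}-x^k\|^2_{A_k}.
\]
Since $\|x^{k+1}-x^k\|^2_{A_k}\ge\lambda_{\min}(A_k)\|x^{k+1}-x^k\|^2$ and the choice $\rho_1=\max\{1,1/\lambda_{\min}(A_k)\}$ forces $\rho_1\lambda_{\min}(A_k)\ge1$, the proximal term dominates the descent-lemma term and a net decrease $\tfrac{(\gamma_1-1)L_1(y^k)}{2}\|x^{k+1}-x^k\|^2$ survives.

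The $y$-step carries the one genuinely new point: CPALM minimizes the surrogate $q(\cdot,y^k)=(\phi'\circ\psi)(y^k)\psi(\cdot)+C_k$ instead of $g$. I would first apply Lemma~\ref{lemma:SuffDec} with $\sigma=q(\cdot,y^k)$, $h=H(x^{k+1},\cdot)$, $A=B_k$, $t=\beta_k$, and then use (\ref{g-majorant}) to pass from the surrogate back to $g$: the tangency $q(y^k,y^k)=g(y^k)$ rewrites the right-hand side in terms of $g$, while the upper bound $g(y^{k+1})\le q(y^{k+1},y^k)$ dominates the left-hand side, so that the decrease is genuinely in $g+H$ and not merely in $q+H$. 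The identical bookkeeping via $\rho_2\lambda_{\min}(B_k)\ge1$ leaves $\tfrac{(\gamma_2-1)L_2(x^{k+1})}{2}\|y^{k+1}-y^k\|^2$. Adding the two block inequalities, the terms $H(x^{k+1},y^k)$ cancel, giving $F(z^{k+1})\le F(z^k)-c_1\|x^{k+1}-x^k\|^2-c_2\|y^{k+1}-y^k\|^2$; using the uniform lower bounds $\lambda_1^-,\lambda_2^-$ of (\ref{inf}) and setting $\delta=\tfrac12\min\{(\gamma_1-1)\lambda_1^-,(\gamma_2-1)\lambda_2^-\}>0$ yields (\ref{nondecF}).

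For (ii) I would sum (\ref{nondecF}) over $k=0,\dots,N$; the right-hand side telescopes to $F(z^0)-F(z^{N+1})\le F(z^0)-\inf F$, which is finite by Assumption~\ref{ass1}(ii), so that $\delta\sum_{k=0}^N|||z^{k+1}-z^k|||^2\le F(z^0)-\inf F$. Letting $N\to\infty$ gives (\ref{finiteseq}) and in particular $|||z^{k+1}-z^k|||\to0$. I expect the only delicate step to be the $y$-block transfer through (\ref{g-majorant}): one must verify that minimizing the tangent majorant still produces a true decrease of the original objective $F$, whereas the weighted-norm estimates and the stepsize constants are routine.
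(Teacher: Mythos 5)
Your proposal is correct and follows essentially the same route as the paper's own proof: Lemma \ref{lemma:SuffDec} applied blockwise (with $\sigma=f$ for the $x$-step and $\sigma=q(\cdot,y^k)$ for the $y$-step), the majorization pair (\ref{g-majorant}) used exactly as you describe to convert the surrogate decrease into a genuine decrease of $g+H$, and a telescoping sum against $\inf F>-\infty$ for the finite-length claim. The only differences are cosmetic: you keep the net decrease in the Euclidean norm via $\rho_i\lambda_{\min}(\cdot)\ge 1$, giving $\delta=\tfrac12\min\{(\gamma_1-1)\lambda_1^-,(\gamma_2-1)\lambda_2^-\}$, whereas the paper keeps it in the weighted norms and then invokes $\underline a,\underline b$ from Assumption \ref{ass3}; and your Gauss--Seidel indexing $\beta_k=\gamma_2\rho_2L_2(x^{k+1})$ with $h=H(x^{k+1},\cdot)$ is the internally consistent reading of the $y$-step that the paper's proof also (implicitly) uses.
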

\begin{proof}
(i) Fix $k\ge 0$. By the Assumption, the functions $x\rightarrow H(x,y)$ 
and $y\rightarrow H(x,y)$ are differentiable and have a Lipschitz continuous gradients with 
modulis $L_1(y)$ and $L_2(x)$, respectively. 
Applying the Lemma \ref{lemma:SuffDec} with $h(\cdot):=H(\cdot,y^k)$ and $\sigma=f$, $t=\alpha_k$, and $A=A_k$,
and invoking $\rho_1=\max\{1, 1/\lambda_{\min}(A_k)$,
$\gamma_1>1$, and $\alpha_k=\gamma_1\rho_1 L_1(y^k)$ 
we obtain 
\begin{eqnarray*}\label{c1}
f(x^{k+1})+H(x^{k+1},y^k)&\le& f(x^k)+H(x^k,y^k)+\frac{L_1(y^k)}{2}\|x^{k+1}-x^k\|^2-\frac{\alpha_k}{2}\|x^{k+1}-x^k\|^2_{A_k} 
\\
&\le& f(x^k)+H(x^k,y^k)-\frac 12 \Big(\alpha_k-\rho_1L_1(y^k)\Big) \|x^{k+1}-x^k\|
_{A_k}^2 \\
&\le& f(x^k)+H(x^k,y^k)-\frac 12 L_1(y^k) \rho_1(\gamma_1-1) \|x^{k+1}-x^k\|.
_{A_k}^2 \\
\end{eqnarray*}

Next, we use the Lemma \ref{lemma:SuffDec} with 
$h(\cdot):=H(x^k,\cdot)$ and $\sigma=q(\cdot,y^k)$, $t=\beta_k$, and $A=B_k$
together with $\rho_2=\max\{1,1/\lambda_{\min} (B_k)\}$, $\gamma_2>1$, 
$\beta_k=\gamma_2\rho_2L_2(x^k)$ as well as (\ref{g-majorant})
we get
\begin{eqnarray*}\label{c2}
g(y^{k+1})+H(x^{k+1},y^{k+1})&\le &q(y^{k+1},y^k)+H(x^{k+1},y^{k+1})\nonumber \\
&\le& q(y^{k},y^k)+H(x^{k+1},y^k)+
\frac{L_2(x^k)}{2}\|y^{k+1}-y^k\|^2-\frac{\beta_k}{2}\|y^{k+1}-y^k\|^2_{B_k} \nonumber\\
&\le &q(y^{k},y^k)+H(x^{k+1},y^k)-\frac 12\Big(\beta_k-\rho_2L_2(x^k)\Big)\|y^{k+1}-y^k\|^2_{B_k}\nonumber \\
&\le &q(y^{k},y^k)+H(x^{k+1},y^k)-\frac 1 2 L_2(x^k) \rho_2(\gamma_2-1) \|y^{k+1}-y^k\|
_{B_k}^2
\\
&= &g(y^{k})+H(x^{k+1},y^k)-\frac 1 2 L_2(x^k) \rho_2(\gamma_2-1) \|y^{k+1}-y^k\|.
_{B_k}^2
\end{eqnarray*}

We then add the above two inequalities, and this leads us to 
\begin{eqnarray}\label{gg}
\begin{array}{l}
F(x^k, y^k)- F(x^{k+1}, y^{k+1}) \\[.1in]
\quad\quad= f(x^k)+g(y^k)+H(x^k,y^k)- f(x^{k+1})-g(y^{k+1})-H(x^{k+1},y^{k+1})\\[.1in]
\quad\quad \ge
 \frac 12 L_1(y^k) \rho_1\big(\gamma_1-1\big) \|x^{k+1}-x^k\|
_{A_k}^2 
+\frac 1 2 L_2(x^k) \rho_2\big(\gamma_2-1\big) \|y^{k+1}-y^k\|_{B_k}^2.
\end{array}
\end{eqnarray}
This result shows that the sequence $\{F(z^k)\}_{k\in\mathbb N}$ is nonincreasing,
and since $F$ is assumed to be bounded from below by Assumption \ref{ass1}(i),(ii),
it converges, let say to $\underline F\in\mathbb R$. 

By Assumption \ref{ass1} (iii), $L_1(y^k)\ge \lambda_1^->0$ and $L_2(x^k)\ge \lambda_2^->0$.
Let also assume $\underline a=\min\{ \lambda_{\min}(A_k):k\in\mathbb N\}$ and
$\underline b=\min\{ \lambda_{\min}(B_k):k\in\mathbb N\}$, then we get 
\begin{eqnarray}\label{cc}
\begin{array}{l}
 \frac 12 L_1(y^k) \rho_1(\gamma_1-1) \|x^{k+1}-x^k\|
_{A_k}^2 
+\frac 1 2 L_2(x^k) \rho_2(\gamma_2-1) \|y^{k+1}-y^k\|_{B_k}^2 
\\[.1in]
\quad\quad\ge  \frac 12 \lambda_1^- \rho_1 \underline a (\gamma_1-1)  \|x^{k+1}-x^k\|^2 
+\frac 1 2 \lambda_2^- \rho_2\underline b(\gamma_2-1) \|y^{k+1}-y^k\|^2 
\\[.1in]
\quad\quad\ge \frac{\delta}{2} \|x^{k+1}-x^k\|^2 +\frac{\delta}{2}\|y^{k+1}-y^k\|^2
= \frac{\delta}{2} |||z^{k+1}-z^k|||^2
\end{array}
\end{eqnarray}

where 
$\delta=\min \{ \lambda_1^- \rho_1  \underline a(\gamma_1-1), \lambda_2^- \rho_2\underline b(\gamma_2-1) \}$.
By combining (\ref{gg}) and (\ref{cc}) we obtain 
\begin{eqnarray}\label{Fdec}
F(z^k)- F(z^{k+1}) \ge \frac{\delta}{2} \|z^{k+1}-z^k\|^2,
\end{eqnarray}
which assertion (i) is proved.

(ii) Let $K$ be a fixed positive integer. Summing up (\ref{Fdec}) from $k=0$ to $K-1$ gives
\begin{eqnarray*}
\sum_{k=0}^{K-1} \|x^{k+1}-x^k\|^2 +\|y^{k+1}-y^k\|^2=
\sum_{k=0}^{K-1}  |||z^{k+1}-z^k|||^2 
\le  \frac{2}{\delta} \Big(F(z^0)-F(z^K)\Big)
\le  \frac{2}{\delta} \Big(F(z^0)-\underline{F}\Big).
\end{eqnarray*}
We let $K$ to approach to infinity to obtain the desired result (ii). $\square$
\end{proof}

\subsection{Approaching the Set of Critical Points}
\begin{lemma}\label{subgbound}
{\bf (A subgradient bound)}
Suppose that Assumptions \ref{ass1} and \ref{ass2} hold. Let $\{z^k\}_{k\in\mathbb N}$ 
be a sequence generated by the CPALM algorithm which is assumed to be bounded. 
For each $k\in\mathbb Z_+$, we define
\begin{eqnarray}
d_x^{k+1}&:= &\nabla_xH(x^{k+1},y^{k+1})-\nabla_xH(x^k,y^k)+\alpha_kA_k (x^{k}-x^{k+1}), \label{dx}\\[.1in]
d_y^{k+1}&:=& \Big((\phi'\circ\psi)(y^{k+1})-(\phi'\circ\psi)(y^{k})\Big)w^{k+1} +\beta_kB_k(y^k-y^{k+1}) \nonumber\\
&&+\nabla_yH(x^{k+1},y^{k+1})   - \nabla_yH(x^{k+1},y^{k}) \label{dy} 
\end{eqnarray}
where $w^{k+1}\in \partial \psi(y^{k+1})$.
Then $(d_x^{k},d_y^{k})\in\partial F(x^k,y^k)$, and there exists $M>0$ and $\nu>0$ such that 
\begin{eqnarray}\label{subgradnorm}
\vertiii{(d_x^{k+1}, d_y^{k+1})}\le \|d_x^{k+1}\|+\|d_y^{k+1}\|\le (2M+\mu \nu+3\xi)\vertiii{z^{k+1}-z^{k}}.
\end{eqnarray}
where 
\[
\xi=\max\{\bar a \gamma_1\rho_1\lambda_1^+, \bar b \gamma_2\rho_2 \lambda_2^+\}.
\]
\end{lemma}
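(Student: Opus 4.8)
The plan is to prove the two assertions separately: first the subgradient membership $(d_x^{k+1},d_y^{k+1})\in\partial F(x^{k+1},y^{k+1})$ (the superscript in the statement should read $k+1$, matching the definitions (\ref{dx})--(\ref{dy})), and then the quantitative bound (\ref{subgradnorm}). For the membership I would write down the first-order optimality conditions of the two subproblems. For the $x$-step, the Chain Rule proposition (part (i)) gives $0\in\partial f(x^{k+1})+\nabla_x H(x^k,y^k)+\alpha_k A_k(x^{k+1}-x^k)$, so that $-\nabla_x H(x^k,y^k)-\alpha_k A_k(x^{k+1}-x^k)\in\partial f(x^{k+1})$; adding $\nabla_x H(x^{k+1},y^{k+1})$ and recalling $\partial_x F(x^{k+1},y^{k+1})=\partial f(x^{k+1})+\nabla_x H(x^{k+1},y^{k+1})$ produces exactly $d_x^{k+1}\in\partial_x F(x^{k+1},y^{k+1})$. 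For the $y$-step, the optimality condition (with the coupling gradient evaluated at the updated $x^{k+1}$, as the definition of $d_y^{k+1}$ requires) reads $0=(\phi'\circ\psi)(y^k)\,w^{k+1}+\nabla_y H(x^{k+1},y^k)+\beta_k B_k(y^{k+1}-y^k)$ for some $w^{k+1}\in\partial\psi(y^{k+1})$.

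The crucial point is that the genuine subgradient of $g=\phi\circ\psi$ at $y^{k+1}$ is $(\phi'\circ\psi)(y^{k+1})\,w^{k+1}$ by the proposition of \cite{RepettiW21}, whereas the majorized subproblem only supplies the factor $(\phi'\circ\psi)(y^k)$. Writing $(\phi'\circ\psi)(y^{k+1})w^{k+1}=(\phi'\circ\psi)(y^k)w^{k+1}+[(\phi'\circ\psi)(y^{k+1})-(\phi'\circ\psi)(y^k)]w^{k+1}$, substituting the optimality relation for the first summand, and adding $\nabla_y H(x^{k+1},y^{k+1})$ recovers precisely $d_y^{k+1}\in\partial_y F(x^{k+1},y^{k+1})$. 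The Subdifferential property proposition then assembles the two blocks into $(d_x^{k+1},d_y^{k+1})\in\partial F(x^{k+1},y^{k+1})$.

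For the bound, the first inequality in (\ref{subgradnorm}) is immediate from the right-hand inequality of (\ref{norm}) with $p=2$. For the second I would estimate each summand of $d_x^{k+1}$ and $d_y^{k+1}$ in turn. The coupling gradient differences $\|\nabla_x H(x^{k+1},y^{k+1})-\nabla_x H(x^k,y^k)\|$ and $\|\nabla_y H(x^{k+1},y^{k+1})-\nabla_y H(x^{k+1},y^k)\|$ are controlled by the bounded-set Lipschitz property (\ref{ineq:LipH}) of Assumption \ref{ass1}(iv), using that $\{z^k\}$ is bounded; each is at most $M\vertiii{z^{k+1}-z^k}$. The metric proximal terms obey $\alpha_k\|A_k\|\le\gamma_1\rho_1\lambda_1^+\bar a\le\xi$ and $\beta_k\|B_k\|\le\gamma_2\rho_2\lambda_2^+\bar b\le\xi$, invoking $\alpha_k=\gamma_1\rho_1 L_1(y^k)$, $\beta_k=\gamma_2\rho_2 L_2(x^k)$, the sup-bounds (\ref{sup}), and Assumption \ref{ass3}(iii). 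The composite term is handled by Remark \ref{rem3}: $\|w^{k+1}\|\le\nu$ from (i) and $|(\phi'\circ\psi)(y^{k+1})-(\phi'\circ\psi)(y^k)|\le\mu\|y^{k+1}-y^k\|$ from (iii), so the first summand of $d_y^{k+1}$ is at most $\mu\nu\|y^{k+1}-y^k\|$. Summing and bounding $\|x^{k+1}-x^k\|,\|y^{k+1}-y^k\|\le\vertiii{z^{k+1}-z^k}$ gives $\|d_x^{k+1}\|+\|d_y^{k+1}\|\le(2M+\mu\nu+2\xi)\vertiii{z^{k+1}-z^k}$, which is dominated by the stated constant $2M+\mu\nu+3\xi$ since $\xi>0$; this establishes (\ref{subgradnorm}).

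The second estimate is essentially bookkeeping with the stated constants and presents no real difficulty. The step I expect to require the most care is the membership, specifically the reconciliation between the majorant gradient factor $(\phi'\circ\psi)(y^k)$ delivered by the $y$-subproblem and the genuine subgradient factor $(\phi'\circ\psi)(y^{k+1})$ of $g$. The add-and-subtract device producing the first term of $d_y^{k+1}$ is the heart of the argument, and it is precisely what makes the Lipschitz estimate on $\phi'\circ\psi$ from Remark \ref{rem3}(iii) indispensable in closing the subsequent bound.
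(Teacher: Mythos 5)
Your proof is correct and takes essentially the same route as the paper: Fermat's rule on both subproblems, the chain rule $\partial(\phi\circ\psi)(y^{k+1})=(\phi'\circ\psi)(y^{k+1})\,\partial\psi(y^{k+1})$ combined with the add-and-subtract device to reconcile the majorant factor $(\phi'\circ\psi)(y^{k})$ with the genuine one $(\phi'\circ\psi)(y^{k+1})$, followed by the same term-by-term estimates (you also correctly fix the index typo, establishing membership at $(x^{k+1},y^{k+1})$). The only immaterial deviation is that you bound $\|\nabla_yH(x^{k+1},y^{k+1})-\nabla_yH(x^{k+1},y^{k})\|$ by the bounded-set constant $M$ where the paper uses the block constant $L_2(x^{k+1})\le\lambda_2^+\le\xi$, which is why you land on the slightly sharper constant $2M+\mu\nu+2\xi$, dominated by the stated $2M+\mu\nu+3\xi$.
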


\begin{proof}
We note that 
\begin{eqnarray*}
\partial F(x,y)=\Big(\partial_x F(x,y), \partial_y F(x,y)\Big)=
\Big(\partial f(x)+\nabla_xH(x,y), \partial g(y)+\nabla_y H(x,y)\Big).
\end{eqnarray*}
Let $k\in\mathbb Z_+$ be a positive integer, and 
let $v^k\in \partial f(x^k)$. The optimality condition of the $x$-minimization subproblem 
(\ref{x-CPALM}) of CPALM method is given by
\begin{eqnarray}\label{x-opt}
v^{k+1}+ \nabla_xH(x^k,y^k)+\alpha_kA_k (x^{k+1}-x^k)=0
\end{eqnarray}
Therefore, it is easy to see that 
\begin{eqnarray}\label{vx}
v^{k+1}+ \nabla_xH(x^{k+1},y^{k+1})\in \partial_x F(x^{k+1},y^{k+1}).
\end{eqnarray}
By (\ref{x-opt}) and (\ref{vx}), then we have
\begin{eqnarray}\label{dxx}
d_x^{k+1}:= \nabla_xH(x^{k+1},y^{k+1})-\nabla_xH(x^k,y^k)+\alpha_kA_k (x^{k}-x^{k+1})\in \partial_x F(x^{k+1},y^{k+1}).
\end{eqnarray}

Similarly, the optimality condition of the $y$-minimization subproblem (\ref{y-CPALM})
of CPALM method can be expressed by
\[
(\phi\circ\psi)(y^k)w^{k+1}+\nabla_yH(x^{k+1},y^k)+\beta_k B_k (y^{k+1}-y^k)=0
\]
where $w^{k+1}\in \partial \psi(y^{k+1})$. 
We note that 
$
\partial g(y)=\partial (\phi\circ\psi)(y)=(\phi'\circ\psi)(y)\partial \psi(y).$
Thus we have $(\phi'\circ\psi)(y^{k+1}) w^{k+1} \in \partial g(y^{k+1})$.
Thus 
\begin{eqnarray}\label{dyy}
d_y^{k+1}&:=&\Big((\phi'\circ\psi)(y^{k+1})-(\phi'\circ\psi)(y^{k})\Big)w^{k+1}\nonumber\\
&&+
 \nabla_yH(x^{k+1},y^{k+1}) - \nabla_yH(x^{k+1},y^{k})
 +\beta_kB_k(y^k-y^{k+1})
\in \partial_y F(x^{k+1},y^{k+1})\nonumber\\
\end{eqnarray}
Therefore $(d_x^{k+1},d_y^{k+1})\in \partial F(x^{k+1},y^{k+1})$.

Next, we obtain the norms of $d_x^{k+1}$ and $d_y^{k+1}$.
Since $\nabla H$ is Lipschitz continuous on bounded subsets of $\mathbb R^n\times \mathbb R^m$
and since the 
sequence $\{z^k\}_{k\in\mathbb N}$ assumed to be bounded, there exists $M>0$ such that

\begin{eqnarray*}
\|d_x^{k+1}\|&\le& \big\| \nabla_xH(x^{k+1},y^{k+1})-\nabla_xH(x^k,y^k)\big\|+\alpha_k\|A_k\| \|x^{k+1}-x^{k}\|\\[.1in]
&\le& M\Big(\|x^{k+1}-x^k\|+ \|y^{k+1}-y^k\|\Big)+\alpha_k\|A_k\| \|x^{k+1}-x^{k}\|\\[.1in]
&=& \Big(M+\alpha_k\|A_k\|\Big)\|x^{k+1}-x^k\|+M \|y^{k+1}-y^k\|.
\end{eqnarray*}
By Assumption \ref{ass3}(iii), the matrix norms $\|A_k\|$ are bounded above by $\bar a$ for all $k\in\mathbb N$.
By the CPALM method we also have $\alpha_k=\gamma_1\rho_1L_1(y^k)\le \gamma_1\rho_1\lambda_1^+$,
thus
\begin{eqnarray*}
\|d_x^{k+1}\| &\le& (M+\bar a \gamma_1\rho_1\lambda_1^+)\|x^{k+1}-x^k\|+M \|y^{k+1}-y^k\|\\
&\le& (2M+\bar a \gamma_1\rho_1\lambda_1^+)  \vertiii{z^{k+1}-z^k}\\
&\le & (2M+\xi) \vertiii{z^{k+1}-z^k}.
\end{eqnarray*}
Next, we use Assumption \ref{ass1} and \ref{ass3}, and Remark \ref{rem3} together with 
$\beta_k=\gamma_2\rho_2L_2(x^k)\le \gamma_2\rho_2\lambda_2^+$ to obtain 
\begin{eqnarray*}
\|d_y^{k+1}\|
&\le& \big\|(\phi'\circ\psi)(y^{k+1})-(\phi'\circ\psi)(y^{k})\big\| \|w^{k+1}\|\\
&&+\big \| \nabla_yH(x^{k+1},y^{k+1}) - \nabla_yH(x^{k+1},y^{k})\big\|
 +\beta_k\|B_k\| \|y^{k+1}-y^{k}\|\\
 &\le & \mu \nu \|y^{k+1}-y^k\|+L_2(x^{k+1})\|y^{k+1}-y^k\|
 +\beta_k \bar b \|y^{k+1}-y^{k}\|\\
 &=& (\mu \nu+L_2(x^{k+1})+\beta_k \bar b ) \|y^{k+1}-y^{k}\|\\
 &\le & \Big(\mu \nu+\lambda_2^+ +\bar b \gamma_2\rho_2 \lambda_2^+ \Big) \|y^{k+1}-y^{k}\|\\
 &\le &  \Big(\mu \nu+ 2\xi\Big) \vertiii{z^{k+1}-z^{k}}.
 \end{eqnarray*}
Summing up these estimates, we get 
 \[
\vertiii{(d_x^{k+1}, d_y^{k+1})}\le \|d_x^{k+1}\|+\|d_y^{k+1}\|\le (2M+\mu \nu+3\xi)\vertiii{z^{k+1}-z^{k}}.
 \]
 This completes the proof. $\square$\\
\end{proof}

Let $\{z^k=(x^k,y^k)\}_{k\in\mathbb N}$ be a sequence generated by the CPALM
algorithm, starting from the initial point $z^0=(x^o,y^0)$. The set of all limit points is denoted by $\omega(z^0)$, that is,
\[
\omega(z^0)=
\Big\{\bar z\in\mathbb R^n\times \mathbb R^m
:\exists \;\textrm{an increasing sequence of integers} \{k_l\}_{l\in\mathbb N}\;
\textrm{s.t.}\; \lim_{l\to\infty}z^{k_l}= \bar z\Big\}.
\]

\begin{lemma}\label{lma-cluster2}
{\bf (Properties of limit point set $\omega(z^0)$)}
Suppose that Assumptions \ref{ass1}, \ref{ass2}, and \ref{ass3} hold. 
Let $\{z^k\}_{k\in\mathbb N}$ be a sequence generated by the CPALM method 
which is assumed to be bounded. 
Then the following statements are true
\begin{itemize}
\item [(i)] $\omega(z^0) \subset {\rm crit} \;F$
\item [(ii)] $\displaystyle{\lim_{k\to\infty} {\rm dist}} \big(z^k, \omega(z^0)\big)=0$.
\item [(iii)] $\omega(z^0)$ is nonempty, connected, and compact.
\item [(iv)] The function $F$ is finite and constant on $\omega(z^0)$. 
\end{itemize}
\end{lemma}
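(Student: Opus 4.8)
The plan is to follow the classical template for KL-based convergence analysis in the spirit of \cite{PALM14}, proving the four assertions in the order (iii), (ii), (iv), (i), since each later claim leans on the earlier ones. Throughout I write $\bar z=(\bar x,\bar y)\in\omega(z^0)$ and let $\{z^{k_l}\}_{l\in\mathbb N}$ be a subsequence with $z^{k_l}\to\bar z$. First I would dispatch the purely topological facts. Nonemptiness of $\omega(z^0)$ is immediate from Bolzano--Weierstrass applied to the bounded sequence $\{z^k\}$; the set of limit points is always closed and, being contained in the bounded closure of $\{z^k\}$, is bounded, hence compact. For (ii), if $\mathrm{dist}(z^k,\omega(z^0))\not\to0$ then some subsequence stays a fixed distance away, yet being bounded it would itself cluster at a point of $\omega(z^0)$, a contradiction. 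Connectedness is the one part of (iii) that genuinely uses the algorithm: by Lemma~\ref{CnvProp}(ii) we have $\vertiii{z^{k+1}-z^k}\to0$, so if $\omega(z^0)$ split into two nonempty compact pieces lying at positive distance, the sequence would have to traverse the gap between them infinitely often with vanishing step sizes, producing limit points strictly between the two pieces and contradicting the splitting.

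The crux is (iv). Lower semicontinuity of $F$ (recall $f,g$ are l.s.c.\ and $H$ is continuous) gives $F(\bar z)\le\liminf_l F(z^{k_l})=\underline F$, where $\underline F$ is the limit value furnished by Lemma~\ref{CnvProp}. For the reverse inequality I would exploit the minimizing property of the subproblems. Since $x^{k_l}$ solves (\ref{x-CPALM}) at index $k_l-1$, testing that objective at the feasible point $\bar x$ and passing to the limit, using $z^{k_l},z^{k_l-1}\to\bar z$ (which follows from $\vertiii{z^{k+1}-z^k}\to0$) together with continuity of $\nabla_xH$ and of the weighted norms, yields $\limsup_l f(x^{k_l})\le f(\bar x)$; combined with l.s.c.\ this forces $f(x^{k_l})\to f(\bar x)$. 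An identical argument on (\ref{y-CPALM}), now invoking continuity of $\psi$ on its domain (Assumption~\ref{ass2}(ii)) and of $\phi'\circ\psi$ (Remark~\ref{rem3}(ii)), gives $\psi(y^{k_l})\to\psi(\bar y)$ and hence $g(y^{k_l})=(\phi\circ\psi)(y^{k_l})\to g(\bar y)$ by continuity of $\phi$. Therefore $F(z^{k_l})\to F(\bar z)$, and since the whole value sequence converges to $\underline F$ we conclude $F(\bar z)=\underline F$ for every $\bar z\in\omega(z^0)$; thus $F$ is finite and constant on $\omega(z^0)$.

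Finally, (i) follows by closing the graph of $\partial F$. By Lemma~\ref{subgbound} there is a subgradient $(d_x^{k_l},d_y^{k_l})\in\partial F(z^{k_l})$ with $\vertiii{(d_x^{k_l},d_y^{k_l})}\le(2M+\mu\nu+3\xi)\,\vertiii{z^{k_l}-z^{k_l-1}}\to0$. Since $z^{k_l}\to\bar z$, $(d_x^{k_l},d_y^{k_l})\to0$, and $F(z^{k_l})\to F(\bar z)$ by (iv), the closedness of $\mathrm{graph}(\partial F)$ recorded in Remark~\ref{rem1}(iii) yields $0\in\partial F(\bar z)$, i.e.\ $\bar z\in\mathrm{crit}\,F$. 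I expect the main obstacle to be precisely the reverse inequality in (iv): lower semicontinuity supplies only one direction, and recovering $\limsup_l F(z^{k_l})\le F(\bar z)$ requires feeding $\bar z$ into the \emph{majorized} subproblems and verifying that the composite term converges to $g$. This is where the specific regularity of $\phi$, $\psi$, and $\phi'\circ\psi$ is indispensable, and where the analysis departs from standard PALM, since one works with the surrogate $q(\cdot,y^k)$ rather than $g$ itself.
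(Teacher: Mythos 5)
Your proposal is correct and follows essentially the same route as the paper: the paper likewise establishes $F$-convergence along subsequences by testing the $x$-subproblem and the majorized $y$-subproblem at the limit point (combining the resulting $\limsup$ bounds with lower semicontinuity), and then concludes criticality via the subgradient bound of Lemma~\ref{subgbound} together with the closedness of ${\rm graph}(\partial F)$ from Remark~\ref{rem1}(iii). The only difference is organizational: the paper embeds the value-convergence argument inside the proof of (i) and dismisses (ii)--(iv) as generic consequences of Lemma~\ref{CnvProp}(ii), whereas you prove them explicitly, which is harmless and arguably more complete.
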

\begin{proof}
(i) Let $z^*=(x^*,y^*)$ be a limit point of  $\{z^k=(x^k,y^k)\}_{k\in\mathbb N}$.
This means that there is a subsequence $\{(x^{k_l},y^{k_l})\}_{l\in\mathbb N}$
such that $\lim_{l\to\infty }(x^{k_l},y^{k_l})\rightarrow (x^*,y^*)$.
Since $f$ and $g$ are lower semicontinuous, we obtain 
\begin{eqnarray}\label{liminf}
\liminf_{l\to\infty}\; f(x^{k_l})\ge f(x^*)
\quad\quad
\liminf_{l\to\infty}\; g(y^{k_l})\ge g(x^*)
\end{eqnarray}
By (\ref{x-CPALM}), the first step of the CPALM method, for all $k\in\mathbb N$ and $x\in\mathbb R^n$
we have 
\begin{eqnarray*}
&f(x^{k+1})+\langle \nabla_x H(x^k,y^k), x^{k+1}-x^k\rangle+\frac{\alpha_k}{2}\|x^{k+1}-x^k\|_{A_k}^2&
\\[.1in]
&\le f(x)+\langle \nabla_x H(x^k,y^k), x-x^k\rangle+\frac{\alpha_k}{2}\|x-x^k\|_{A_k}^2.&
\end{eqnarray*}
Thus letting $x=x^*$ in the above, we get 
\begin{eqnarray*}
&f(x^{k+1})+\langle \nabla_x H(x^k,y^k), x^{k+1}-x^k\rangle+\frac{\alpha_k}{2}\|x^{k+1}-x^k\|_{A_k}^2&
\\[.1in]
&\le f(x^*)+\langle \nabla_x H(x^k,y^k), x^*-x^k\rangle+\frac{\alpha_k}{2}\|x^*-x^k\|_{A_k}^2.&
\end{eqnarray*}
We choose $k=k_l-1$ in the above inequality and letting $l$ goes to infinity, we obtain
\begin{eqnarray*}
\limsup_{l\to\infty}f(x^{k_l})
\le  f(x^*)+ \limsup_{l\to\infty}\Big(\langle \nabla_x H(x^{k_l},y^{k_l}), x^*-x^{k_l}\rangle+\frac{\alpha_k}{2}\|x^*-x^{k_l}\|_{A_k}^2\Big),
\end{eqnarray*}
where we have used the facts that 
the sequences $\{x^k\}_{k\in\mathbb N}$ and $\{\alpha_k\}_{k\in\mathbb N}$ are bounded,
$\nabla H$ continuous and that the distance between two successive iterates tends to 
zero (see Lemma \ref{CnvProp}(ii)). We also have $ \lim_{l\to\infty} x^{k_l}=x^*$, hence the latter 
inequality reduces to 
$
\limsup_{l\to\infty}f(x^{k_l})\le f(x^*).
$
By this, together with (\ref{liminf}) we obtain 
$\lim_{l\to\infty}f(x^{k_l})= f(x^*).$

Now by the alternative version of iterative step (\ref{y-CPALM}) given in (\ref{VM-PALM-majorant}),
and using (\ref{gstructure}) we have
\begin{eqnarray*}
\begin{array}{l}
g(y^{k+1})+\langle \nabla_y H(x^k,y^k), y^{k+1}-y^k\rangle +
\frac{\beta_k}{2}\|y^{k+1}-y^k\|_{B_k}^2\\[.1in]
\quad\quad\quad\quad\le q(y^{k+1},y^k)+\langle \nabla_y H(x^k,y^k), y^{k+1}-y^k\rangle +
\frac{\beta_k}{2}\|y^{k+1}-y^k\|_{B_k}^2\\[.1in]
\quad\quad\quad\quad\le q(y,y^k)+\langle \nabla_y H(x^k,y^k), y-y^k\rangle +
\frac{\beta_k}{2}\|y-y^k\|_{B_k}^2.
\end{array}
\end{eqnarray*}
We let $y=y^*$ on the right hand side to obtain 
\begin{eqnarray*}
\begin{array}{l}
g(y^{k+1})+\langle \nabla_y H(x^k,y^k), y^{k+1}-y^k\rangle +
\frac{\beta_k}{2}\|y^{k+1}-y^k\|_{B_k}^2\\[.1in]
\quad\quad\le q(y^*,y^k)+\langle \nabla_y H(x^k,y^k), y^*-y^k\rangle +
\frac{\beta_k}{2}\|y^*-y^k\|_{B_k}^2.
\end{array}
\end{eqnarray*}
We choose $k=k_l-1$ in the above inequality and letting $l$ goes to infinity, 
and by the fact that the sequences $\{y^k\}_{k\in\mathbb N}$ and $\{\beta_k\}_{k\in\mathbb N}$
are bounded, $\nabla H$ continuous and that the distance between two successive iterates tends to 
zero (see Lemma \ref{CnvProp}(ii)) we obtain
\begin{eqnarray*}
\begin{array}{lll}
\limsup_{l\to\infty}g(y^{k_l})
&\le& 
\limsup_{l\to\infty}\Big(q(y^*,y^{k_l})+\langle \nabla_y H(x^{k_l},y^{k_l}), y^*-y^{k_l}\rangle +
\frac{\beta_k}{2}\|y^*-y^{k_l}\|_{B_k}^2\Big)\\
&=& 
\limsup_{l\to\infty}\Big((\phi\circ\psi)(y^{k_l})+(\phi'\circ\psi)(y^{k_l})\big(\psi(y^*)-\psi(y^{k_l})\big)
\\
&&\quad\quad\quad\quad\quad\quad
+
\langle \nabla_y H(x^{k_l},y^{k_l}), y^*-y^{k_l}\rangle +
\frac{\beta_k}{2}\|y^*-y^{k_l}\|_{B_k}^2\Big)
\end{array}
\end{eqnarray*}
By the continuity of $\phi\circ\psi$ and $\phi'\circ\psi$
(see Remark \ref{rem3}), Assumption \ref{ass2}(ii), and $ \lim_{l\to\infty} y^{k_l}=y^*$ we obtain 
$\limsup_{l\to\infty}g(y^{k_l})\le 
(\phi\circ\psi)(y^*)=g(y^*)$.
This and (\ref{liminf}) then reduces to 
$\limsup_{l\to\infty}g(y^{k_l})=g(y^*)$.
Therefore, 
\begin{eqnarray*}
\lim_{l\to\infty} F(x^{k_l},y^{k_l})=\lim_{l\to\infty} \Big(f(x^{k_l})+g(y^{k_l})+ H(x^{k_l},y^{k_l})\Big)
=f(x^*)+g(y^*)+H(x^*,y^*)=F(x^*,y^*).
\end{eqnarray*}

On the other hand we know from Lemma \ref{CnvProp}(ii) and  \ref{subgbound}
that $(d_x^k,d_y^k)\in\partial F(x^k,y^k)$
and $(d_x^k,d_y^k)\rightarrow (0,0)$ as $k\to\infty$.
The closedness property of $\partial F$ (see Remark \ref{rem1}(iii)) implies that $(0,0)\in\partial F(x^*,y^*)$.
This proves that $(x^*,y^*)\in {\rm crit} F$.

The proof of (ii), (iii), and (iv) are generic, and are the subsequence of Lemma \ref{CnvProp}(ii). $\square$
\end{proof}

Our objective is now to prove that the sequence generated by the CPALM method 
converges to a critical point of problem (\ref{originalprob})-(\ref{gstructure}). 
For this purpose we consider that the objective function is a KL function (see Section \ref{sub:KL}).

\subsection{Convergence of CPALM to Critical Points}
\begin{theorem}{\bf (Convergence)}\label{conv}
Suppose that $F$ is a KL function such that Assumption \ref{ass1}, \ref{ass2}, and \ref{ass3} hold. 
Let $\{z^k\}_{k\in\mathbb N}$ be a sequence generated by CPALM which is assumed
to be bounded. The following assertions hold.
\begin{itemize}
\item [(i)] The sequence $\{z^k\}_{k\in\mathbb N}$ has finite length, that is,
\begin{eqnarray}\label{finite}
\sum_{k=1}^{\infty}\vertiii{z^{k+1}-z^{k}}<\infty
\end{eqnarray}
\item [(ii)] The sequence $\{z^k\}_{k\in\mathbb N}$ converges to a critical point $z^*=(x^*,y^*)$ of $F$.
\end{itemize}
\end{theorem}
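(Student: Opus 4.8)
The plan is to follow the now-standard KL-based argument of Attouch--Bolte--Svaiter, assembling the three preceding lemmas into a summability estimate. First I would dispose of the trivial case: set $\underline F:=\lim_k F(z^k)$, which exists by Lemma \ref{CnvProp}(i). If $F(z^{k_0})=\underline F$ for some finite $k_0$, then the sufficient-decrease inequality (\ref{nondecF}) forces $z^{k+1}=z^k$ for all $k\ge k_0$, so the sequence is eventually stationary and both claims are immediate. Hence I may assume $F(z^k)>\underline F$ for every $k$.

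Next I would invoke the uniformized KL property. By Lemma \ref{lma-cluster2} the cluster set $\Omega:=\omega(z^0)$ is nonempty and compact, $F$ is constant on it with value $\underline F$, and $F$ is a KL function; Lemma \ref{UniKL} therefore supplies $\epsilon,\eta>0$ and a desingularising $\psi\in\Gamma_\eta$ such that the KL inequality holds on the set (\ref{GammaSet}) with $\bar x$ taken to be any point of $\Omega$. Because $\mathrm{dist}(z^k,\Omega)\to0$ (Lemma \ref{lma-cluster2}(ii)) and $F(z^k)$ decreases to $\underline F$, there is an index $k_1$ beyond which $z^k$ lies in this set, so that
\begin{eqnarray*}
\psi'\big(F(z^k)-\underline F\big)\,\|\partial F(z^k)\|_-\ge 1,\qquad k\ge k_1.
\end{eqnarray*}
Combining this with the subgradient bound of Lemma \ref{subgbound} in the form $\|\partial F(z^k)\|_-\le C\vertiii{z^k-z^{k-1}}$, where $C:=2M+\mu\nu+3\xi$, yields a lower bound on $\psi'\big(F(z^k)-\underline F\big)$ by the reciprocal of $\vertiii{z^k-z^{k-1}}$.

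The heart of the argument is then to chain this estimate with the concavity of $\psi$ and the sufficient-decrease bound. Writing $\Delta_k:=\psi\big(F(z^k)-\underline F\big)-\psi\big(F(z^{k+1})-\underline F\big)$, concavity gives $\Delta_k\ge \psi'\big(F(z^k)-\underline F\big)\,\big(F(z^k)-F(z^{k+1})\big)$, and inserting (\ref{nondecF}) together with the KL/subgradient bound produces
\begin{eqnarray*}
\vertiii{z^{k+1}-z^k}^2\le \frac{C}{\delta}\,\Delta_k\,\vertiii{z^k-z^{k-1}}.
\end{eqnarray*}
Applying the elementary inequality $2\sqrt{ab}\le a+b$ converts this into the linear estimate $2\vertiii{z^{k+1}-z^k}\le \vertiii{z^k-z^{k-1}}+\frac{C}{\delta}\Delta_k$. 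Summing over $k\ge k_1$, the $\Delta_k$ telescope and $\psi\ge0$, so after absorbing the shifted sum $\sum\vertiii{z^k-z^{k-1}}$ on the left one is left with a bound on $\sum_{k\ge k_1}\vertiii{z^{k+1}-z^k}$ that is uniform in the upper summation limit; letting it tend to infinity establishes (i). For (ii), finite length forces $\{z^k\}$ to be Cauchy, hence convergent to some $z^*$; since $z^*\in\omega(z^0)\subset\mathrm{crit}\,F$ by Lemma \ref{lma-cluster2}(i), the limit $z^*$ is a critical point.

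The step I expect to be the main obstacle is the bookkeeping in the uniformized KL application: one must verify that the inequality may be used with the \emph{same} constant $\underline F$ for every large $k$ (this is exactly what the constancy of $F$ on $\Omega$ in Lemma \ref{lma-cluster2}(iv) provides), and that the index shift in the subgradient bound --- where the residual at $z^k$ is controlled by $\vertiii{z^k-z^{k-1}}$ rather than by $\vertiii{z^{k+1}-z^k}$ --- is tracked consistently so that the telescoping sum and the arithmetic--geometric splitting line up correctly.
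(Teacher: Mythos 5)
Your proposal is correct and follows essentially the same route as the paper: the trivial stationary case, the uniformized KL property (Lemma \ref{UniKL}) applied on $\Omega=\omega(z^0)$, the subgradient bound of Lemma \ref{subgbound} with the index shift $\|\partial F(z^k)\|_-\le(2M+\mu\nu+3\xi)\vertiii{z^k-z^{k-1}}$, the concavity estimate for $\psi$, the arithmetic--geometric splitting $2\sqrt{ab}\le a+b$, and the telescoping sum, followed by the Cauchy argument for part (ii). The bookkeeping concerns you flag at the end are handled in the paper exactly as you anticipate, so there is nothing to add.
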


\begin{proof}
Since the sequence $\{z^k\}_{k\in\mathbb N}$ is bounded,
 there exists a converging subsequence $\{z^{k_l}\}_{l\in\mathbb N}$.
 We assume $z^{k_l}\rightarrow z^*$ as $l\to\infty$.
Thus $z^*\in \omega(z^0)\neq\emptyset$.
By Lemma \ref{lma-cluster2}(i), we get that 
\begin{eqnarray}\label{lim}
\lim_{k\to\infty} F(x^k,y^k)=F(x^*,y^*)
\end{eqnarray}
If there exists an integer $\bar k$ for which $F(z^{\bar k})=F(z^*)$
then the inequality (\ref{nondecF}) implies that $z^{k}=z^*$ for all 
$k\ge \bar k$. Thus the sequence  $\{z^k\}_{k\in\mathbb N}$ is an stationary 
sequence and (\ref{finite}) follows.

Since $\{F(z^k)\}_{k\in\mathbb N}$ is a non-increasing sequence,
by (\ref{lim}) we have $F(z^*)<F(z^k)$ for all $k\in\mathbb N$.
Again from (\ref{lim}) for any $\eta>0$
there exists a $k_0$ such that 
$F(z^k)\le F(z^*)+\eta$ for all $k>k_0$.
By Lemma \ref{lma-cluster2}(ii), we know that 
$\lim_{k\to\infty} {\rm dis}(z^k, \omega(z^0))=0$.
This means that for any $\epsilon>0$ there exists a positive integer $k_1$
such that ${\rm dis}(z^k, \omega(z^0))<\epsilon$
for all $k>k_1$. Summing up all these facts, 
we get that $z^k$ belongs to the set 
\begin{eqnarray*}
\Gamma_{\eta,\omega(z^0)}
=\{z: {\rm dist} (z^k, \omega(z^0))<\epsilon
\;\;\;
{\rm and}
\;\;\;
F(z^*)\le F(z^k)\le F(z^*)+\eta
\}
\end{eqnarray*}
for all $k>\hat k=\max\{k_0,k_1\}$.

(i)
Since  $\omega(z^0)$ is nonempty and compact (see Lemma \ref{lma-cluster2}(iii)),
and since $F$ is finite and constant on $\omega(z^0)$  (see Lemma \ref{lma-cluster2}(iv)),
we can apply Lemma \ref{UniKL} with $\Omega=\omega(z^0)$.
Thus, for any $k>\hat k$ we have
\begin{eqnarray*}
\psi'\Big(F(z^k)-F(z^*)\Big)  {\rm dist} \Big(0, \partial F(z^k)\Big)\ge 1.
\end{eqnarray*}
By Lemma \ref{subgbound}, we get that
\begin{eqnarray}\label{xx}
\psi'\Big(F(z^k)-F(z^*)\Big) \ge \frac{1}{2M+\mu\nu+3\xi} |||z^{k}-z^{k-1}|||^{-1}.
\end{eqnarray}
In addition, since $\psi$ is concave, we have,
for every $(u_1,u_2)\in[0,\eta]^2$,
\[
\psi(u_1)-\psi(u_2)\ge \psi'(u_1)(u_1-u_2).
\]
By taking $u_1=F(z^k)-F(z^*)$ and $u_2=F(z^{k+1})-F(z^*)$, we obtain
\begin{eqnarray}\label{ff}
\begin{array}{l}
\psi\Big(F(z^k)-F(z^*)\Big)-\psi\Big(F(z^{k+1})-F(z^*)\Big)\\
\quad\quad\ge\psi'\Big(F(z^k)-F(z^*)\Big)\Big(F(z^k)-F(z^{k+1})\Big) 
\end{array}
\end{eqnarray}
For convenience, we define for all $p,q\in\mathbb N$ and $z^*$ the following quantity
\[
\Delta_{p,q}= \psi\Big(F(z^p)-F(z^*)\Big)-\psi\Big(F(z^q)-F(z^*)\Big)
\]
Then, combining (\ref{xx}) and (\ref{ff}) and Lemma \ref{CnvProp}(i)
for any $k>\hat k$ we have
\begin{eqnarray*}
\Delta_{k,k+1}\ge \frac{\delta}{2M+\mu\nu+3\xi}\frac{|||z^{k+1}-z^k|||^2}{|||z^{k}-z^{k-1}|||}
\end{eqnarray*}
We  rearrange this to obtain 
\begin{eqnarray*}
|||z^{k+1}-z^k|||^2 \le C \Delta_{k,k+1}|||z^{k}-z^{k-1}|||
\end{eqnarray*}
where 
$
C=  (2M+\mu\nu+3\xi)/\delta.
$
Using the fact that $\sqrt{\alpha\beta}\le (\alpha+\beta)/2$ for all $\alpha,\beta\ge 0$ we obtain
\begin{eqnarray}\label{cc}
2|||z^{k+1}-z^k||| \le |||z^{k}-z^{k-1}|||+ C \Delta_{k,k+1}
\end{eqnarray}
Summing up (\ref{cc}) for $k> \hat k$ yields
\begin{eqnarray*}
2\sum_{i=\hat k+1}^k |||z^{i+1}-z^i||| &\le&
\sum_{i=\hat k+1}^k  |||z^{i}-z^{i-1}|||+ C\sum_{i=\hat k+1}^k \Delta_{i,i+1}\\
&\le & \sum_{i=\hat k+1}^k  |||z^{i+1}-z^{i}|||+  |||z^{\hat k+1}-z^{\hat k}|||+C\sum_{i=\hat k+1}^k \Delta_{i,i+1}\\
&=&\sum_{i=\hat k+1}^k  |||z^{i+1}-z^{i}|||+  |||z^{\hat k+1}-z^{\hat k}|||+C\sum_{i=\hat k+1}^k \Delta_{\hat k+1,i+1}
\end{eqnarray*}
where the last inequality follows the fact that $\Delta_{p,q}+\Delta_{q,r}=\Delta_{p,r}$
for all $p,q,r\in\mathbb N$. Since $\psi\ge 0$, we thus have for any $k\ge \hat k$ that
\[
\sum_{i=\hat k+1}^k |||z^{i+1}-z^i|||\le |||z^{\hat k+1}-z^{\hat k}|||+
C\psi\Big(F(z^{\hat k+1})-F(z^*)\Big).
\] 
This shows that the sequence $\{z^k\}_{k\in\mathbb N}$ has finite length, that is 
\begin{eqnarray}\label{flength}
\sum_{i=1}^{\infty} |||z^{k+1}-z^k|||<\infty.
\end{eqnarray}

(ii) The result (\ref{flength}) implies that the sequence $\{z^k\}_{k\in\mathbb N}$
is Cauchy, thus it converges. Then by Lemma \ref{lma-cluster2} (i) 
the limit point is a critical point of $F$.

\end{proof}

\subsection{Extension of CPALM }
We consider an extension of CPALM method to the more general setting of the form 
\begin{eqnarray}\label{cPALMex}
\min_{x \in R^{n}, y_j\in\mathbb R^{m_j}} \Psi(x, y_1,\dots, y_q):=
f(x)+g(y_1,\dots,y_p)+H(x,y_1,\dots,y_p), 
\end{eqnarray} 
where  $H:\mathbb R^n\times \mathbb R^M$ with $M=\sum_{j=1}^q m_j$
is assumed to be a smooth function,  $f:\mathbb R^n\to]-\infty,+\infty]$ 
and $g:\mathbb R^M\to]-\infty,+\infty]$ are proper and lower-semicontinuous functions. 
More specifically, the function $g$ is a sum
of composite functions as follows
\begin{eqnarray*}
\forall y\in\mathbb R^M\quad g(y_1,\dots,y_p)=\sum_{j=1}^q g_j(y_j)=
\sum_{j=1}^q (\phi_j\circ \psi_j)(y_j). 
\end{eqnarray*}
where for every 
$j\in\{1,2,\dots,q\}$,
$\psi_j:\mathbb R^n\to [0,+\infty]$
is convex, proper, lower semi-continuous and Lipschitz continuous on its domain,
and $\phi_j:[0,+\infty]\to]-\infty,+\infty]$ is concave, strictly increasing and differentiable function, such that 
$(\phi_j\circ\psi_j)$ is Lipschitz-continuous on the domain of $\psi_j$.

For simplicity of the presentation of the algorithm for the case of more than two blocks we will
use the following notations.
For $y=[y_1,\dots,y_q]\in\mathbb R^M$, $y_{<j}:=[y_1; \dots; y_{j-1}]\in\mathbb R^{m_1+\dots+m_{j-1}}$
and $y_{>j}:=[x_{j+1};\dots;y_q]\in\mathbb R^{m_{j+1}+\dots+m_q}$
(clearly, $x_{<0}$ and $x_{>p}$ are null variables, which may be used for notational ease.)
By the multi-block variant of (\ref{VM-PALM}) we have 
\begin{eqnarray}\label{VM-PALMex}
\left\{
\begin{array}{l}
x^{k+1}\in\displaystyle{\arg\min_{x\in\mathbb R^n}} \;\;\;
f(x)+\Big\langle \nabla_x H(x^k,y^k), x-x^k\Big\rangle
+\frac{\alpha_k}{2}\Big\|x-x^k\Big\|_{A_k}^2,
\\[.1in]
y_1^{k+1}\in\displaystyle{\arg\min_{y_1\in\mathbb R^{m_1}}} \;\;\;(\phi_1\circ\psi_1)(y_1)
+\Big\langle \nabla_{y_1} H(x^{k+1},y_1^k,y_{j>1}^k), y_1-y_1^k\Big\rangle 
+\frac{\beta_k^1}{2}\Big\|y_1-y_1^k\Big\|^2_{B_k^1} 
\\[.1in]
\vdots\\[.1in]
y_j^{k+1}\in\displaystyle{\arg\min_{y_j\in\mathbb R^{m_j}}} \;\;\;(\phi_j\circ\psi_j)(y_j)
+\Big\langle \nabla_{y_j} H(x^{k+1}, y^{k+1}_{<j}, y_j^k, y^k_{>j}), y_j-y_j^k\Big\rangle 
+\frac{\beta_k^j}{2}\Big\|y_j-y_j^k\Big\|^2_{B_k^j} 
\\[.1in]
\vdots
\\[.1in]
y_p^{k+1}\in\displaystyle{\arg\min_{y_p\in\mathbb R^{m_p}}} \;\;\;(\phi_p\circ\psi_p)(y_p)
+\Big\langle \nabla_{y_p} H(x^{k+1},y_{<p}^{k+1},y_p^k), y_p-y_p^k\Big\rangle 
+\frac{\beta_k^p}{2}\Big\|y_p-y_p^k\Big\|^2_{B_k^p} 
\end{array}
\right.
\end{eqnarray}
where $A_k\in\mathbb R^{n\times n}$ and $B_k^j\in\mathbb R^{m_j\times m_j}$
for $j=1,\dots,p$ are symmetric positive definite matrices,
and $\{\alpha_k\}_{k\in\mathbb N}$
and $\{\beta_k^j\}_{k\in\mathbb N}$ for $j=1,\dots,p$
are positive real sequences.
Due to the composite form of $g_j$ the solution of subproblem might not be 
computable, either efficiently or at all. Thus, to overcome this issue, we 
replace at each iteration $k\in\mathbb N$, the function 
$g_j$ by an approximation denoted by $q_j(y_j,y_j^k):\mathbb R^{m_j}\to ]-\infty,+\infty]$
a majorant function of $(\phi_j\circ\psi_j)$ at $y_j^k$,
\begin{eqnarray*}
(\forall y_j\in\mathbb R^{m_j})
\quad
\left
\{
\begin{array}{l}
(\phi_j\circ\psi_j)(y_j)\le q_j(y_j,y_j^k)\\[.1in]
(\phi_j\circ\psi_j)(y_j^k)=q_j(y_j^k,y_j^k)
\end{array}
\right.
\end{eqnarray*}
and is obtained by taking the tangent of the concave differentiable
function $\phi_j$ at $\psi_j(y_j^k)$ for every $j\in\{1,\dots,p\}$ and $k\in\mathbb N$:
\begin{eqnarray*}
(\forall y_j\in\mathbb R^{m_j})
\quad
q_j(y_j,y_j^k)= 
(\phi_j\circ\psi_j)(y_j^k)
+\Upsilon_k^j\Big(\psi_j(y_j)-\psi_j(y_j^k)\Big).
\end{eqnarray*}
where 
\begin{eqnarray}\label{upsilon}
\Upsilon_k^j:=(\phi'\circ\psi_j)(y_j^k).
\end{eqnarray}
\vspace{.2in}
The multi-block version of CPALM method, called Multi-CPALM, is given as follows 

{\bf Multi-CPALM: Multi-Block CPALM }
\begin{itemize}
\item[1.] Initialization: choose a starting point $(x^0,y_1^0, \cdots, y_p^0)\in\mathbb R^n\times \mathbb R^M$.
\item[2.] For each $k=0,1,\dots$ generate a sequence $\{(x^k,y^k)\}_{k\in\mathbb N}$ as follows
\begin{itemize}
\item[2.1.] Take $\rho_1=\max\{1,\frac{1}{\lambda_{\min}(A_k)}\}$, $\gamma_1>1$, set $\alpha_k=\gamma_1\rho_1L_1(y^k)$ and compute
\begin{eqnarray*}
x^{k+1}\in\displaystyle{\arg\min_x} \;
f(x)+\Big\langle \nabla_x H(x^k,y_1^k,\cdots,y_p^k), x-x^k\Big\rangle
+\frac{\alpha_k}{2}\Big\|x-x^k\Big\|_{A_k}^2,
\end{eqnarray*}
\item[2.2.] 
For $j=1,\dots, p$,
compute $\Upsilon_k^j:=(\phi'\circ\psi_j)(y_j^k)$,
take $\rho_2^j=\max\{1,\frac{1}{\lambda_{\min} (B_k^j)}\}$,
$\gamma_2^j>1$, 
set $\beta_k^j=\gamma_2^j\rho_2^j L_2^j(x^{k+1}, y_{<j}^{k+1},y_{>j}^{k})$
and compute
\begin{eqnarray*}
y_j^{k+1}\in\displaystyle{\arg\min_{y_j}} \;
\Upsilon_k^j\psi_j(y_j)
+\big\langle \nabla_{y_j} H(x^{k+1}, y^{k+1}_{<j}, y_j^k, y^k_{>j}), y_j-y_j^k\big\rangle 
+\frac{\beta_k^j}{2}\Big\|y_j-y_j^k\Big\|^2_{B_k^j}
\end{eqnarray*}

\end{itemize}
\end{itemize}
Theorem \ref{conv} can be simply extended and applied for the Multi-CPALM.

\section{Simulations}\label{simulation}
In this section we consider two different models arising from Parallel MRI application to illustrate the numerical performance of the
proposed CPALM method. All results have been implementedin MTLAB 2020a and executed 
on a Macbook Pro 1.4 GHz Quad-Core Intel Core i5.

\subsection{Parallel MRI}
Parallel MRI is a technique that exploits the differences in the spatial sensitivity of multiple receiver coils
acquired simultaneously to localize signal. 
The subsampling $k$-space reduces the acquisition time significantly in comparison to the traditional MRI,
which leads to decreasing motion related artifacts, reducing breath-hold time, and 
shorter duration of diagnostic exam. 

Assume $u\in\mathbb R^{M\times N}$ denotes the image, and $u_{i,j}$ 
denotes the intensity of the  gray level at the $(i,j)$ pixel. 
In Parallel MRI involving $N_c$ radio frequency coils, the operator 
$A=[A_1, A_2, \dots, A_{N_c} ]$ is defined by 
$
A: u\to [A_1u; A_2u,\dots; A_{N_c} u],
$
where $A_i:R^{M\times N}\to R^{M\times N}$ for $i=1,\dots, N_c$ given by
\[
A_i u=P\circ \mathcal F \big(S_i\circ u\big),
\]
where $P$, $\mathcal F$, and $S_i$ are operators on $\Omega$.
The operator $P$ is the under-sampling pattern, called trajectory or mask,
$\mathcal F$ denotes the Fourier transform operator, and 
$S_i$ is the sensitivity map of the $i$-th coil.
The notation $\circ$ is the Hadamard product between two matrices
and $[\cdot; \cdot]$ means stacking the operators above each other. 
The observed data $\hat u=[\hat u_1; \hat u_2; \dots, \hat u_{N_c}]$
with $\hat u_i$ is corresponding under-sampled image from the $i$th coil 
defined by
\[
\hat u_i=\mathcal A_iu+\xi_i
\]
where $u$ is the true image and $\xi_i$ is 
the observation error. Note that the true image $u$ and artifacts $\xi_i$, 
for $i=1,\dots,N_c$ are unknown. 
\begin{figure}
 \begin{center}
\begin{tabular}{ccc} 
{\includegraphics[width=.2\textwidth]{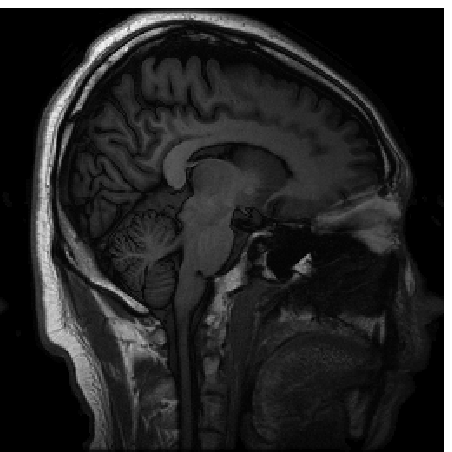}} &
{\includegraphics[width=.2\textwidth]{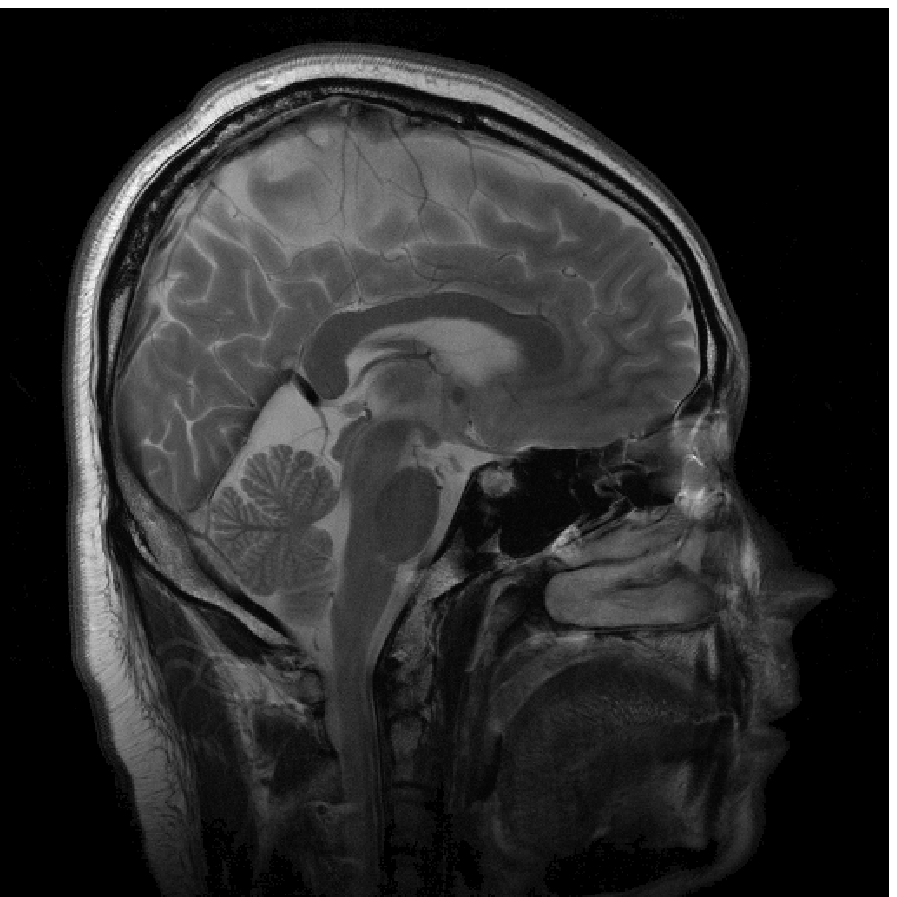}}&
{\includegraphics[width=.2\textwidth]{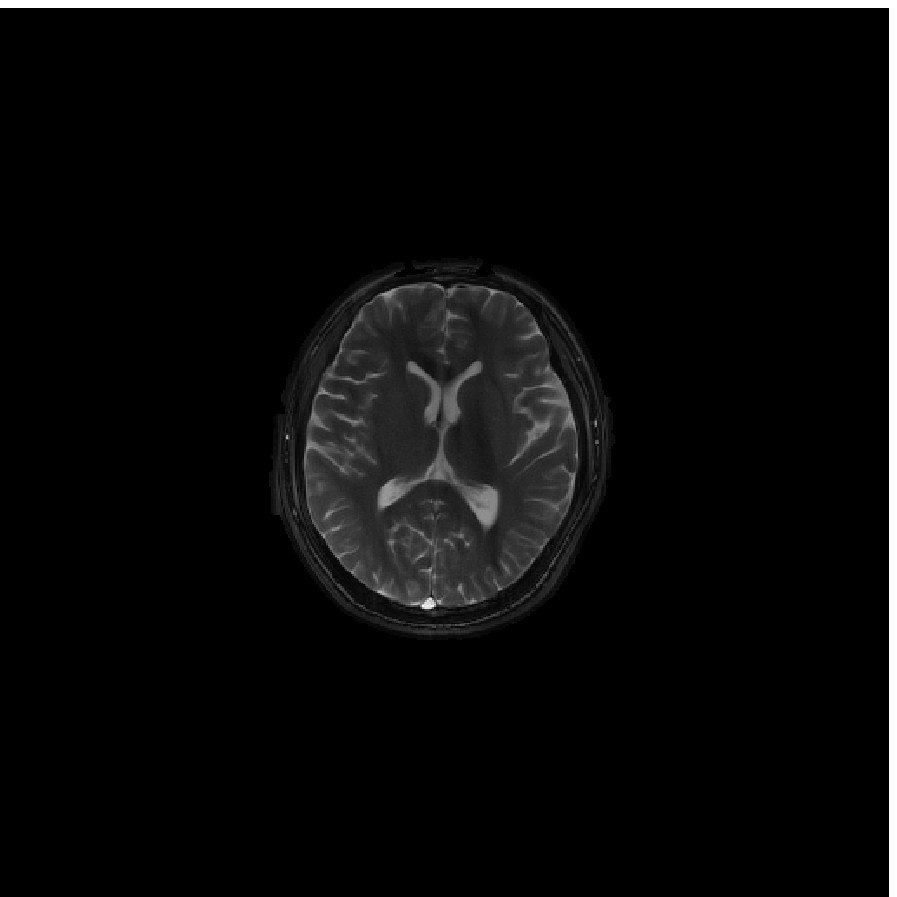}} \\
(a) data1 & (b) data2 & (c) data3 \\
\end{tabular}
\end{center}
\caption {Ground truth images.}
\label{Fig:data} 
\end{figure}

For numerical experiments, we consider the data1, data2, and data3
shown in Figure \ref{Fig:data} and 
their acquisition parameters given in Table \ref{acq}.
 \begin{table}
 {\small
\begin{center}
 \begin{tabular}{ccccccc} 
\toprule
Ground truth &  size ($\times 8$)& FOV (mm$^2$) & TR (mm)  & TE
(ms) & slice thickness  &  flip angle\\\midrule
(a) data1 & $256\times 256$ &220 & 3060 ms &126 ms & 5& 90$^\circ$  \\
(b) data2 & $512 \times 512$ & 205& 3000 ms &85 ms & 5 & 90$^\circ$  \\
(c) data3 & $512 \times 512$ &220& 53.5 ms &3.4 ms & 5& 75$^\circ$  \\
 \bottomrule
  \end{tabular}
   \end{center}
    \caption{The acquisition parameters for data1, data2, and data3.}\label{acq}}
\end{table}
For all three data sets, the ground truth image from a $N_c=8$ channel coil is given by
\[
{u}_{i,j}^*=\Big({\sum_{k=1}^{N_c}\|{u}_{ij}(k)\|^2}\Big)^{1/2},
\]
where ${u}_{i,j}(k)$ is the $i,j$-th component of the Fourier transform associated
with the full $k$-space data on the $k$-th channel.
In acquiring data1 and data2, a Poisson random mask $P$ with
a 25\% undersampling, and for data3  a radial mask with a 
34\% undersampling ratio is used. Figure \ref{Fig:mask} displays these 
undersampling patterns.
\begin{figure}[h]
 \begin{center}
 \begin{tabular}{cc}
{\includegraphics[width=.2\textwidth]{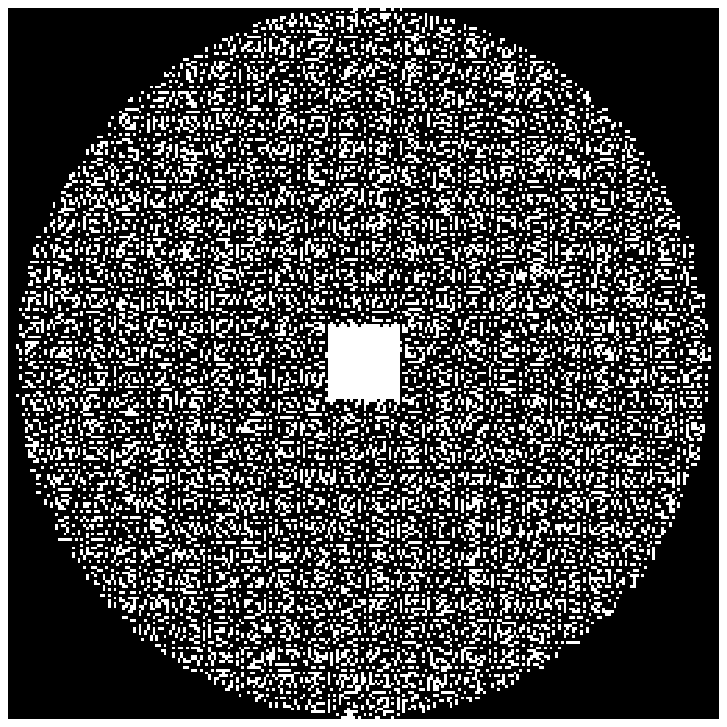}}
&{\includegraphics[width=.2\textwidth]{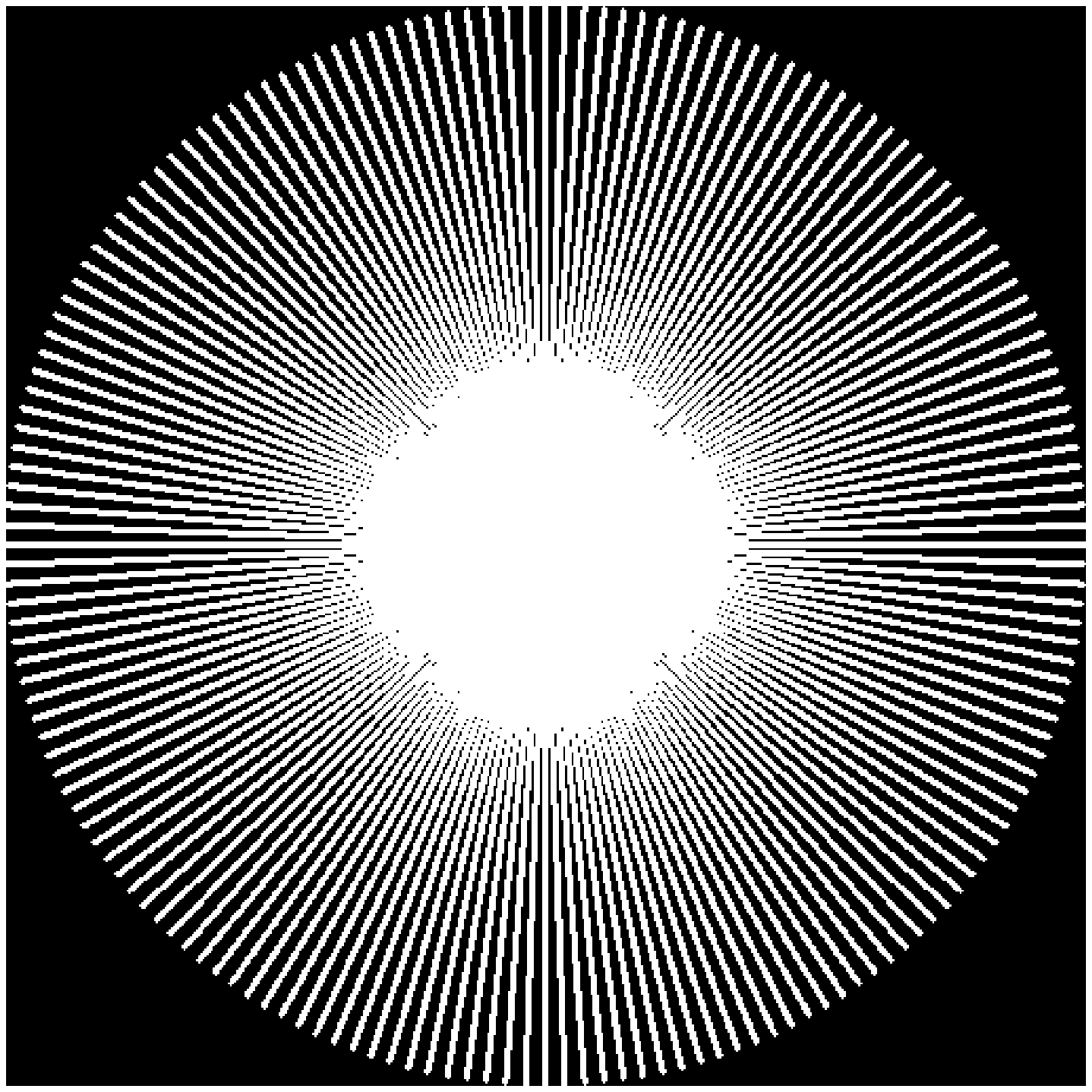}}\\
(a)&(b)
\end{tabular}
 \end{center}
 \caption{
(a) Poisson mask, with 25\% undersampling ratio, (b) Radial mask, with 34\% undersampling ratio.}
\label{Fig:mask}
\end{figure}
Due to the undersampling affect in MRI, the integrated images from all coils
has noise and artifact, as we see in Figure \ref{Fig:datanoisy} and they need to be removed
through the reconstruction process to obtain high-quality images.
\begin{figure}
  \centering
   \begin{center}
\begin{tabular}{ccc} 
{\includegraphics[width=.2\textwidth]{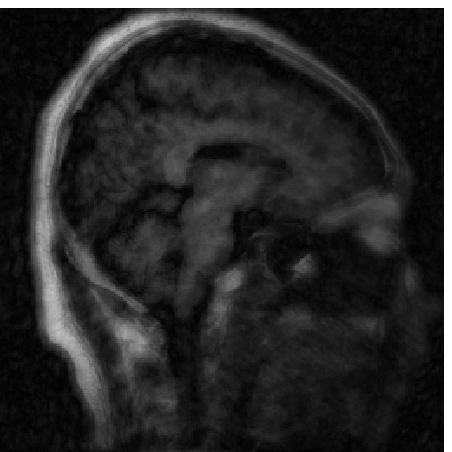}}&
{\includegraphics[width=.2\textwidth]{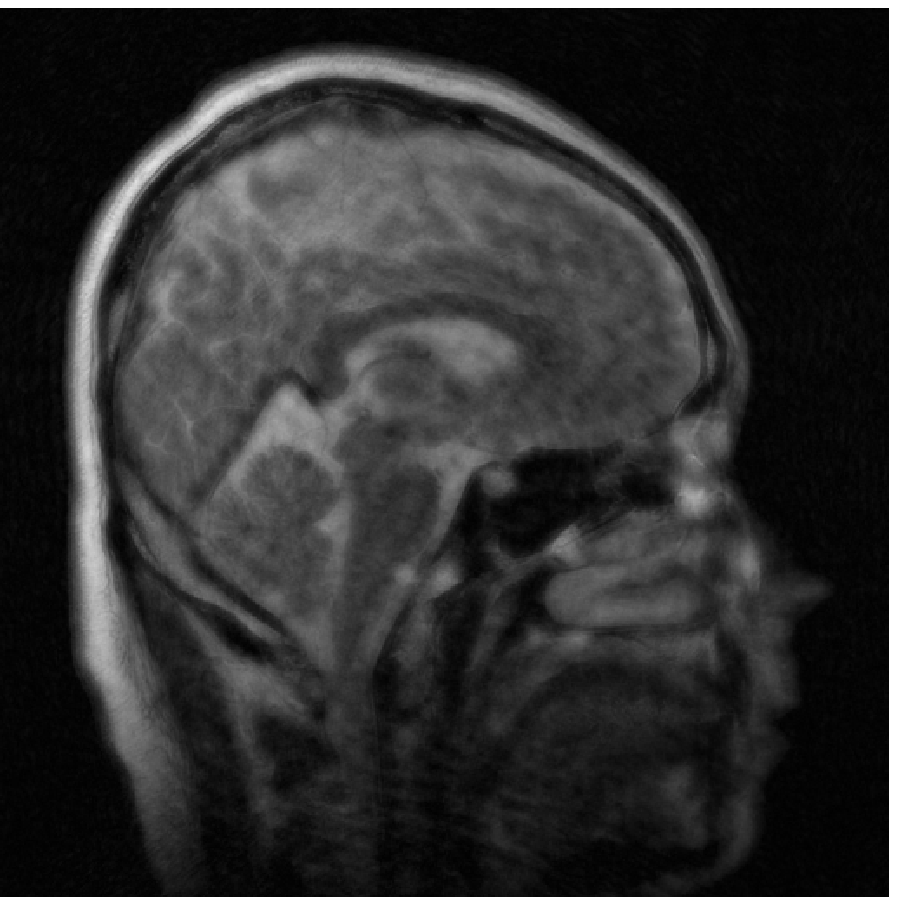}}&
{\includegraphics[width=.2\textwidth]{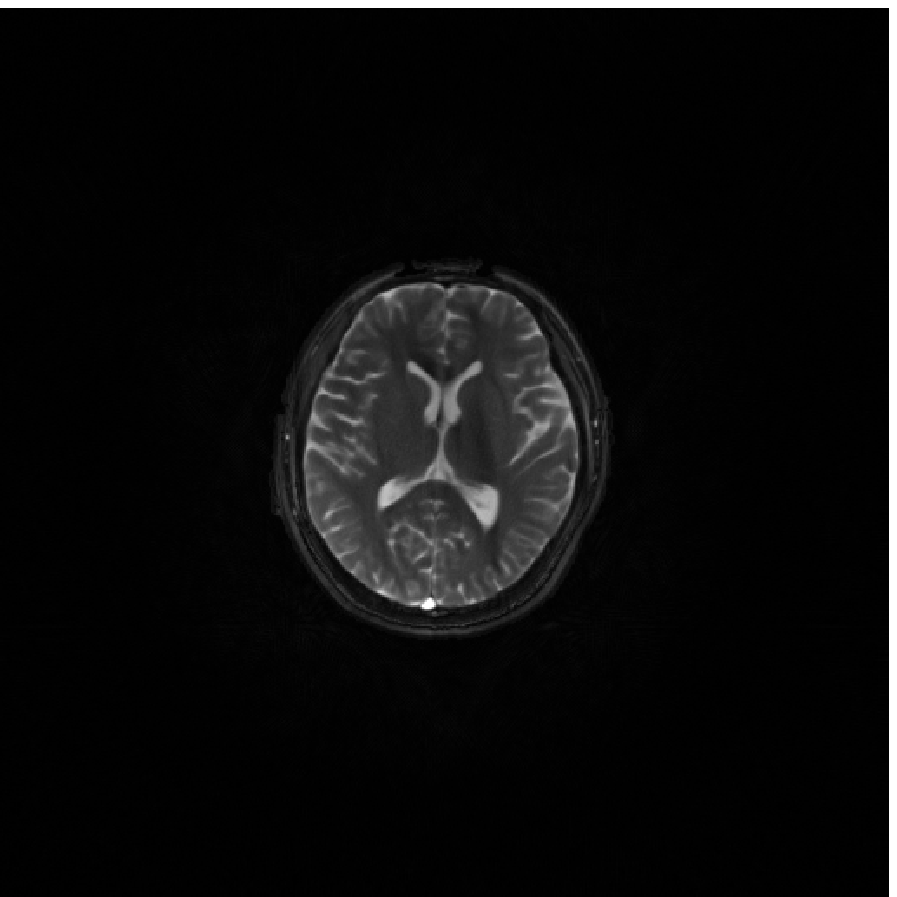}} \\
(a)  & (b)  & (c)\\
\end{tabular}
\end{center}
  \begin{minipage}[b]{1.0\linewidth}
    \begin{center}
    \begin{tabular}{cccc} 
\toprule
Observed images ($\hat u$):  &  (a) data1 & (b) data2 & (c) data3 \\\midrule
SNR &10.14 &15.72&17.92 \\
PSNR &23.17 &27.19&40.04\\
 \bottomrule
    \end{tabular}
    \end{center}
      \end{minipage}
\caption {Observed integrated images obtained from MRI machine, showing noise and artifacts.}
\label{Fig:datanoisy} 
\end{figure}

We measure the quality of images with SNR (Signal to Noise Ratio) and PSNR (Peak Signal to Noise Ratio) 
defined by
\[
{\rm SNR}:= 10 \log_{10}\frac{\|u\|^2_{F}}{\|u-u_0\|^2_F}
\quad 
{\rm and}\quad
{\rm PSNR}:= 10 \log_{10}\frac{\|u\|^2_{F}}{\|u-u_0\|^2_F/n},
\] 
where $u_0$ is the ground truth image, without noise, $n=M\times N$ is the total numbers of pixels in the image,
and $\|.\|_F$ is the Frobenius norm.  The higher value of SNR and PSNR stands for better image quality.

The main objective is to
minimize the discrepancy between the observed perturbed output
$\hat u$ from $A u$, the predicted output of the linear  model 
$u\mapsto Au$.~Most notably, minimizing the quadratic penalty function
 \begin{eqnarray}\label{f}
\min_{u}\frac 12  \| Au-\hat u\|_F^2 
 \end{eqnarray}
where $\|\cdot\|$ is the $\ell_2$ norm.
Some regularizations is often added to the data fidelity term to make the problem well-posed
 \cite{chy13,hnyz15,hyz13,yashtini19,yhcy12}. In the following, we consider two regularization types 
 thus two different models and describe the Multi-CPALM method to solve them. 

\subsection{Log-sum penalization}\label{logsum}
The first model that we consider is the log-sum model given below
\begin{eqnarray}\label{logsum-prob}
\min_{u\in\mathbb R^{M\times N}}
\frac{\lambda}{2} \|Au-\hat u\|_F^2 +\frac{1}{2\mu} \sum_{i,j} \log\big(1+\mu \|(Du)_{i,j}\|^2\big)
\end{eqnarray} 
where $\hat u\in\mathbb R^{M\times N\times k}$  is the observed MRI images, with noise and artifacts.
Here $M\times N$ is the size of image and $k$ is the number of coils used in MRI machine.  
$\|Du\|$ is the matrix of coordinates $\|(Du)_{i,j}\|:=\sqrt{((D_xu)_{i,j})^2+((D_yu)_{i,j})^2}$,
where $D_xu$ is a discrete implementation of the $x$-derivative
of the image
and $D_yu$ is a discrete implementation of the $y$-derivative
of the image, (both considered as a function $\mathbb R^2 \to \mathbb R$).

We introduce a new variable $w=(w^1,w^2)\in\mathbb R^{M\times N\times 2}$, 
$w_{i,j}=(Du)_{i,j}=\big((D_xu)_{i,j}, (D_yu)_{i,j}\big)$ and a proper penalization 
to  obtain 
\begin{eqnarray}\label{sim-prob}
\min_{u}
\frac{\lambda}{2} \|Au-\hat u\|_F^2 +
\frac{1}{2\mu}\sum_{i,j} \log\big(1+\mu\big \|w_{i,j}\big \|^2\big)
+\frac{\tau}{2} \sum_{i,j} \big\|w_{i,j}-(Du)_{i,j}\big\|^2
\end{eqnarray} 
where $\tau>0$. 
Comparing (\ref{sim-prob}) with (\ref{cPALMex}), we have $x=u$ and instead of $y_j$ we have $w_{i,j}$, 
and
\begin{eqnarray*}
&f(u)= \frac{\lambda}{2} \|Au-\hat u\|_F^2, \quad
g(w)=\frac{1}{2\mu}\sum_{i,j} \log\big(1+\mu\big \|w_{i,j}\big \|^2\big)&
\\[.1in]
&H(u,w)=\frac{\tau}{2} \sum_{i,j} \big\|w_{i,j}-(Du)_{i,j}\big\|^2,
\quad
g(w)=\sum_{i,j} g_{i,j} (w_{i,j})
=\sum_{i,j} (\phi_{i,j}\circ\psi_{i,j}) (w_{i,j}) 
&
\\[.1in]
&\phi_{i,j}(x)=\frac{1}{2\mu}\log(1+\mu x^2),\quad
\psi_{i,j}(w_{i,j})=\|w_{i,j}\|. &
\end{eqnarray*}
We execute the Multi-CPALM with the following subproblems to solve (\ref{sim-prob}):
\begin{eqnarray}\label{CPALM:ex1-u}
u^{k+1}= \displaystyle{\arg\min_{u}}
\Big\{\frac{\lambda}{2} \|Au-\hat u\|_F^2+\tau\Big \langle D\tr (Du^{k}-w^k), u\Big\rangle +
\frac{\alpha_k}{2} \Big\|u-u^k\Big\|^2_{A_k} \Big\}
\end{eqnarray}
and for all $i,j$ we update 
\begin{eqnarray}\label{CPALM:ex1-w}
w^{k+1}_{i,j}= \displaystyle{\arg\min_{w_{i,j}}}
\Big\{\Upsilon^{i,j}_{k} \Big\|w_{i,j}\Big\| + 
\tau \Big\langle w_{i,j}^{k}-(Du^{k+1})_{i,j}, w_{i,j}\Big\rangle
+\frac{\beta^{i,j}_k}{2} \Big\|w_{i,j}-w_{i,j}^k\Big\|^2_{B_k^{i,j}}\Big\},
\end{eqnarray}
where $\Upsilon^{i,j}_{k}=\|w_{i,j}^k\|/(1+\mu \|w_{i,j}^k\|^2)$. 
We choose $B_k^{i,j}$ to be identity matrices for all $k\in\mathbb N$,
and all $i=1,\dots,M$ and $j=1,\dots,N$.
By the fact that the matrix $A$ and as the result $A\tr A$ are  ill-conditioned,
it is not practical to invert them. 
To eliminate this issue, we consider the symmetric
variable matrices $A_k= \frac{\delta_k}{\alpha_k} I -\frac{\lambda}{\alpha_k} A\tr A$, with 
$\delta_k > \lambda \rho(A\tr A)$ where $\rho(A\tr A)$ denotes
the spectral radius of $A\tr A$. With this consideration, the optimality condition of 
$u$-subproblem (\ref{CPALM:ex1-u}) leads us to a closed form solution for $u^{k+1}$ as follows
\begin{eqnarray}\label{numex1:u}
u^{k+1}= u^k-\delta^{-1} \Big(\lambda A \tr (A u^k-\hat u) + \tau D\tr (Du^k-w^k)\Big).
\end{eqnarray}
The $w_{i,j}$ subproblem (\ref{CPALM:ex1-w}) is non-smooth and can be rewritten as follows 
\begin{eqnarray*}
w^{k+1}_{i,j} = \displaystyle{\arg\min_{w_{i,j}}}
\Big\{
\Upsilon^{i,j}_k \|w_j\| + 
\frac{\beta^{i,j}_{k}}{2} 
\Big\| w_{i,j}-w_{i,j}^k +\frac{\tau}{\beta^{i,j}_k} \big(w_{i,j}^{k}-(Du^{k+1})_{i,j}\big)\Big\|^2
\Big\}
\end{eqnarray*}
and solved by the shrinkage formula in a closed form 
\begin{eqnarray}\label{numex1:w}
w^{k+1}_{i,j}={\rm shrink} \Big\{ w_{i,j}^k -\frac{\tau}{\beta^{i,j}_k}
 \big(w_{i,j}^{k}-(Du^{k+1})_{i,j}\big), 1/ \Upsilon^{i,j}_k \Big\}, \forall i,j
\end{eqnarray}
where ${\rm shrink} (t,\mu)=\frac{t}{\|t\|} \max \{\|t\|-\frac{1}{\mu}, 0\}$ with the convention $(\frac{0}{|0|}=0)$.

\subsection{$\ell_p^p$ penalization}\label{lp}
In this section we show that the CPALM method can be used to solve nonconvex $\ell_p^p$ norms,
where $p\in ]0,1[$, given by
\begin{eqnarray*}
\|x\|_p=\Big(\sum_{i=1}^N |x_i|^p\Big)^{1/p}\quad  \;\;\forall x\in\mathbb R^N.
\end{eqnarray*}
The $\ell_p^p$ penalization model for MRI reconstruction is given by 
\begin{eqnarray}\label{lp-prob}
\min_{u\in\mathbb R^{M\times N}}\;\; \frac{\lambda}{2} \|Au-\hat u\|_F^2+ \theta \sum_{i,j} \big\|(Du)_{i,j}\big\|^p. 
\end{eqnarray}
where $\theta>0$.
By introducing a new variable $w=(w^1,w^2)\in\mathbb R^{M\times N\times 2}$, 
$w_{i,j}=(Du)_{i,j}=\big((D_xu)_{i,j}, (D_yu)_{i,j}\big)$ and a proper penalization 
we obtain 
\begin{eqnarray}\label{eq-ex3}
\min_{u}\;\; \frac{\lambda}{2} \|Au-\hat u\|_F^2+
\theta \sum_{i,j} \big\|w_{i,j}\big\|^p
+\frac{\tau}{2}\sum_{i,j}\big \|w_{i,j}-(Du)_{i,j}\big\|^2
\end{eqnarray}
where $\tau>0$. 
Comparing (\ref{lp-prob}) with (\ref{cPALMex}), we have $x=u$ and instead of $y_j$ we have $w_{i,j}$, 
and
\begin{eqnarray*}
&f(u)= \frac{\lambda}{2} \|Au-\hat u\|_F^2, \quad
g(w)=\theta \sum_{i,j} \big\|w_{i,j}\big\|^p
&
\\[.1in]
&H(u,w)=\frac{\tau}{2} \sum_{i,j} \big\|w_{i,j}-(Du)_{i,j}\big\|^2,
\quad
g(w)=\sum_{i,j} g_{i,j} (w_{i,j})
=\sum_{i,j} (\phi_{i,j}\circ\psi_{i,j}) (w_{i,j}) 
&
\\[.1in]
&\phi_{i,j}(x)=\theta x^p,\quad
\psi_{i,j}(w_{i,j})=\|w_{i,j}\|. &
\end{eqnarray*}
We exploit the Multi-CPALM with the following subproblem to solve (\ref{lp-prob})
\begin{eqnarray}\label{CPALM:ex2}
\begin{array}{lll}
u^{k+1}&=& \displaystyle{\arg\min_{u}}
\Big\{\frac{\lambda}{2} \|Au-\hat u\|_F^2+\tau \Big\langle D\tr (Du^{k}-w^k), u\Big\rangle +
\frac{\alpha_k}{2} \Big\|u-u^k\Big\|^2_{A_k} \Big\}
\\[.1in]
w^{k+1}_{i,j}&=& \displaystyle{\arg\min_{w_{i,j}}}
\Big\{\Upsilon^{i,j}_k \Big\|w_{i,j}\Big\| + 
\tau \Big\langle w_{i,j}^{k}-(Du^{k+1})_{i,j}, w_{i,j}\Big\rangle
+\frac{\beta^{i,j}_k}{2} \Big\|w_{i,j}-w_{i,j}^k\Big\|_{B^{i,j}_k}^2\Big\},
\;\;\forall i,j
\end{array}
\end{eqnarray}
where $\Upsilon^{i,j}_k= \theta p \|w_{i,j}\|^{p-1}$.
Similar to previous example, we consider $A_k= \frac{\delta_k}{\alpha_k} I -\frac{\lambda}{\alpha_k} A\tr A$, with $\delta_k > \lambda \rho(A\tr A)$, and $B_k^{i,j}$ as identity matrices for all $i,j$ and $k$. 
Thus the $u$-subproblem is solved exactly by (\ref{numex1:u}) and the $w$-subproblem by (\ref{numex1:w}), but
$\Upsilon^{i,j}_k= \theta p \|w_{i,j}\|^{p-1}$.

\subsection{Numerical Results and Comparison}
In this section, we compare the reconstruction results by the CPALM method for solving 
two different models (\ref{logsum-prob}) and (\ref{lp-prob}), discussed in 
Sections \ref{logsum} and \ref{lp}.  
We consider the following parameter values:
 $\lambda=1000$, $\mu=0.0001$, $\theta=0.0001$, $\tau=1$, $\delta=1000$, $\beta=10$, and $p=0.5$.

Figure \ref{Fig:logsum} and \ref{Fig:lp} show
the results obtained by the CPALM method for 
solving  the log-sum penalization model (\ref{logsum-prob})
and the $\ell_p$ penalization model (\ref{lp-prob}), respectively. 
(a), (b), and (c) demonstrates the reconstruction results. 
Comparing SNR and PSNR with Table \ref{Fig:datanoisy} we observe
a significant improvement obtained by the proposed method.
The image relative error defined by $\|u^k-u_0\|/\sqrt{n}\|u_0\|$
versus CPU (sec.) time are also shown  in 
parts (d), (e), (f). We observe that all trajectories
are monotonically decreasing. 
\begin{figure}
 \begin{center}
\begin{tabular}{ccc} 
{\includegraphics[width=.2\textwidth]{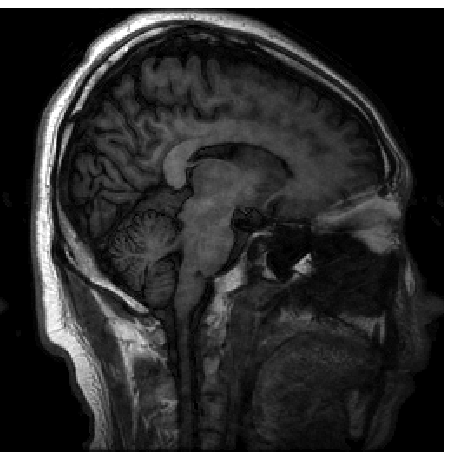}}&
{\includegraphics[width=.2\textwidth]{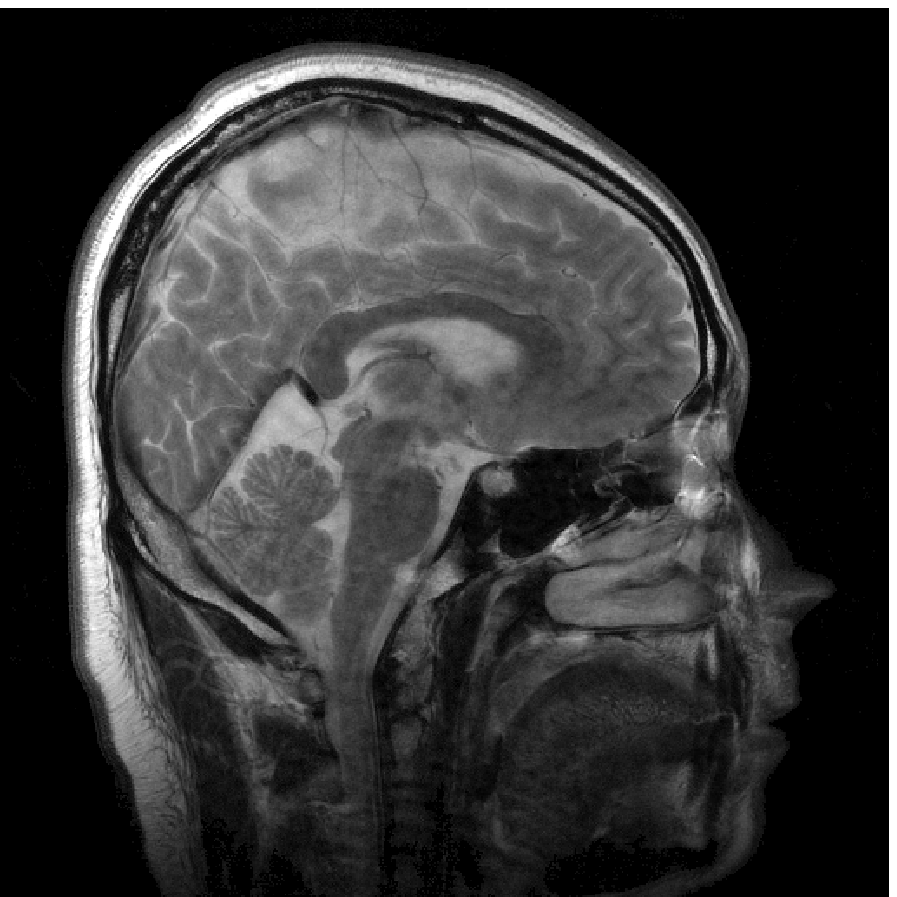}}&
{\includegraphics[width=.2\textwidth]{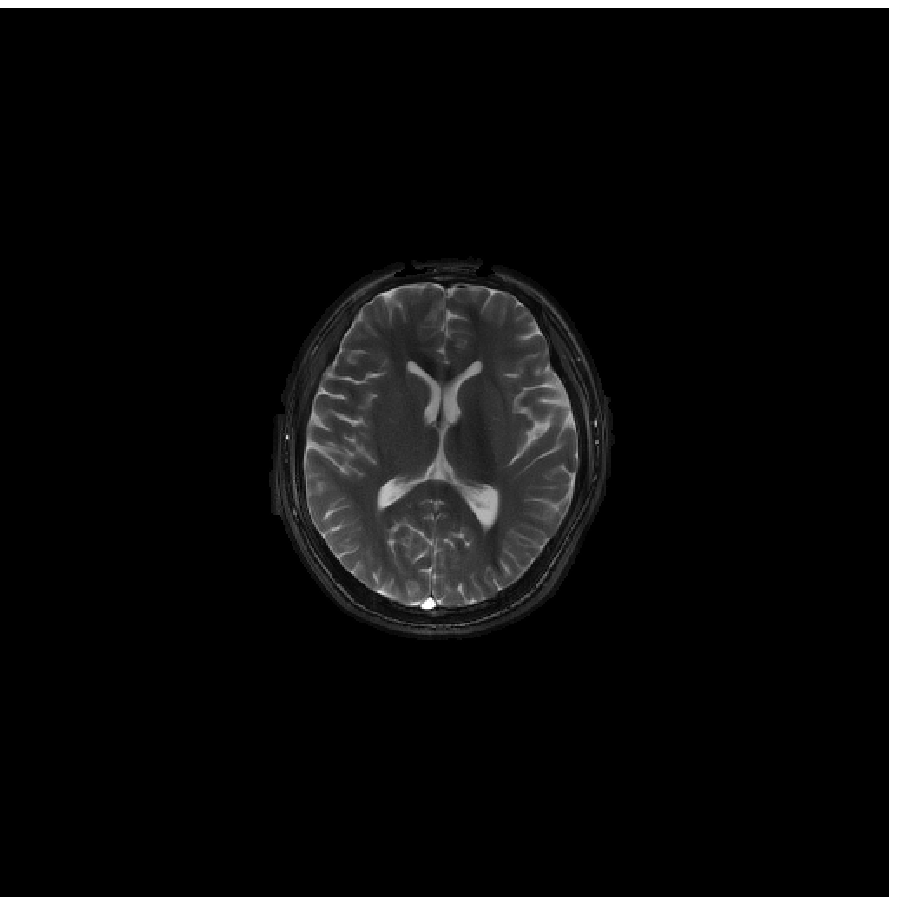}}
 \\
(a) Restored data1 & (b) Restored data2& (c) Restored data3\\
{\includegraphics[width=.25\textwidth]{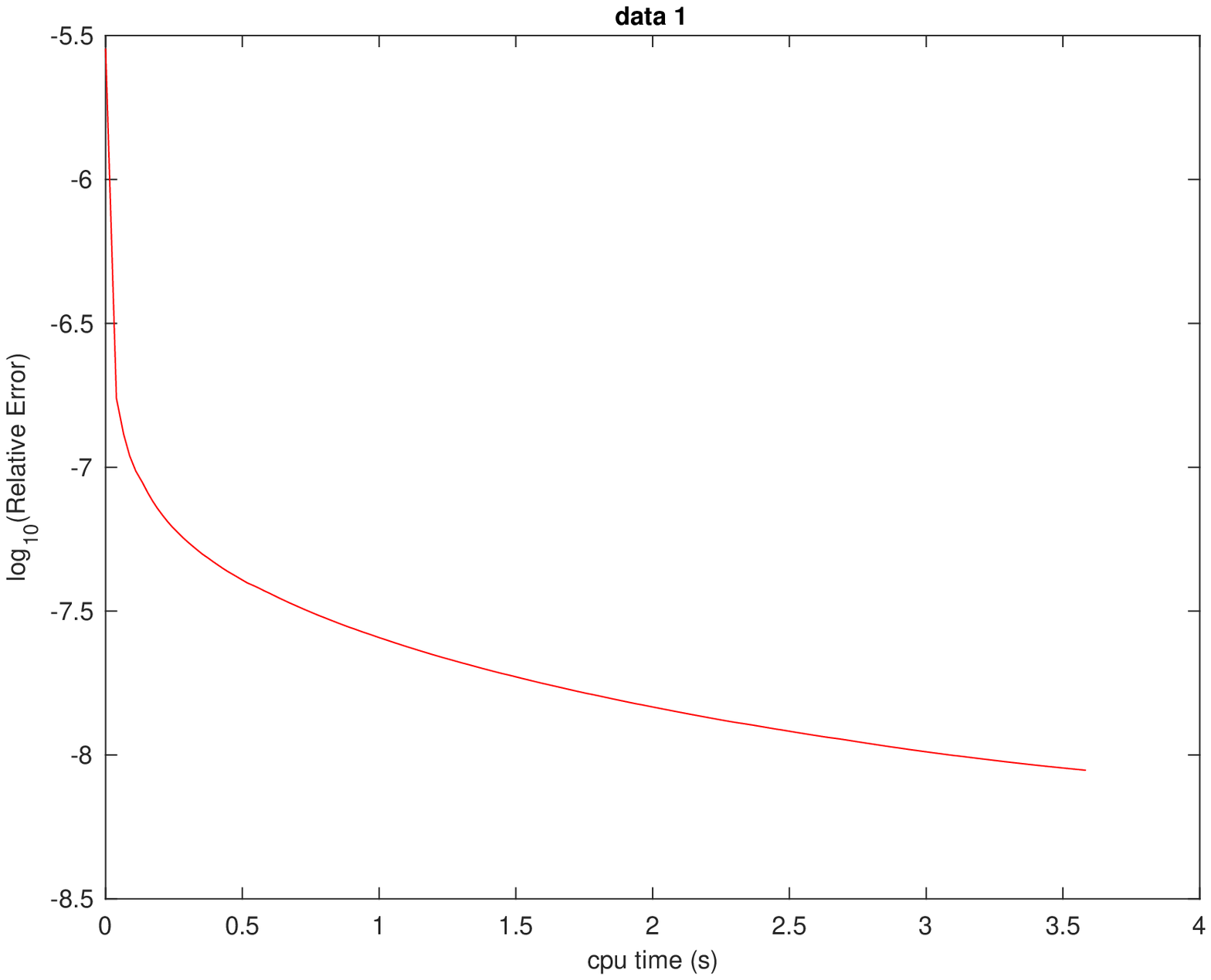}}&
{\includegraphics[width=.25\textwidth]{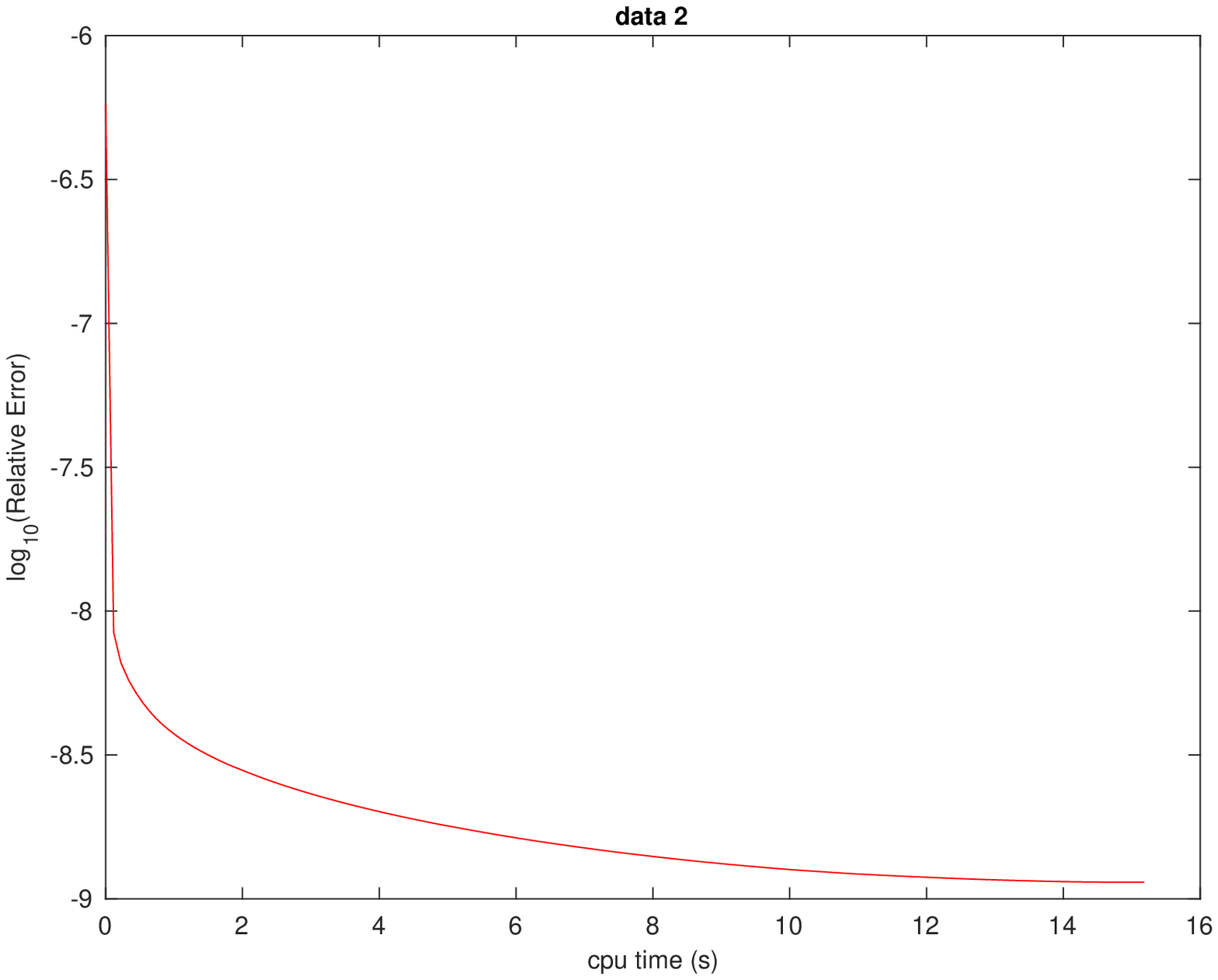}}&
{\includegraphics[width=.25\textwidth]{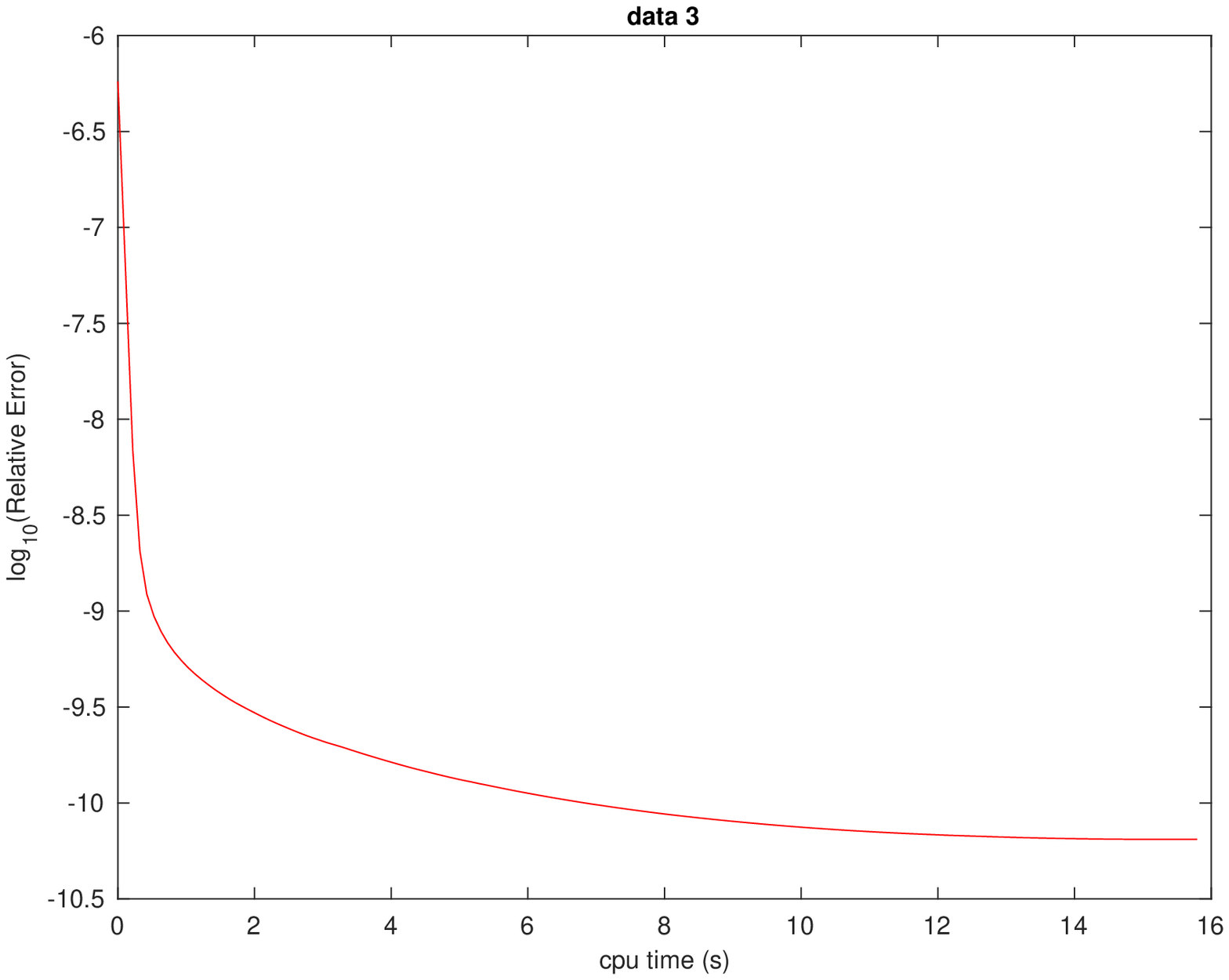}}\\
(d)&(e)&(f)\\
\end{tabular}
\end{center}
  \centering
  \begin{minipage}[b]{1.0\linewidth}
    \begin{center}
    \begin{tabular}{ccccc} 
\toprule
Given image  & SNR & PSNR & RelErr& CPU \\\midrule
(a) &26.38&40.99& 3.1816e-04&3.62\\
(b) &36.05 &48.47&1.3827e-04&18.96\\
(c) &35.83&58.34&4.7899e-05&19.22\\
 \bottomrule
    \end{tabular}
    \end{center}
      \end{minipage}
\caption {[log-sum results]
The results obtained by the CPALM method for 
solving  the log-sum penalization model (\ref{logsum-prob}). 
(a), (b), and (c) demonstrates the reconstruction results. 
We observe that SNR and PSNR improved significantly.
The image relative Error versus CPU (sec.) time are also shown  in 
(d), (e), (f). We observe that the trajectories are decreasing.}
 \label{Fig:logsum} 
\end{figure}
\begin{figure}
 \begin{center}
\begin{tabular}{ccc} 
{\includegraphics[width=.2\textwidth]{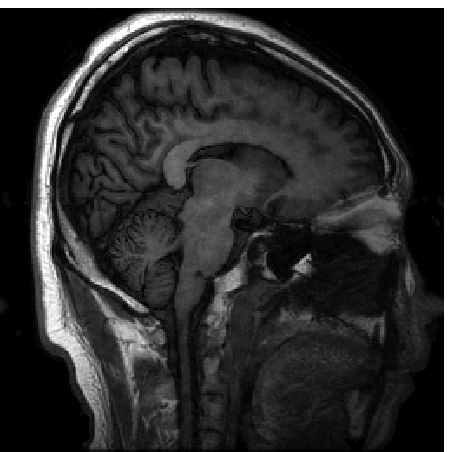}}&
{\includegraphics[width=.2\textwidth]{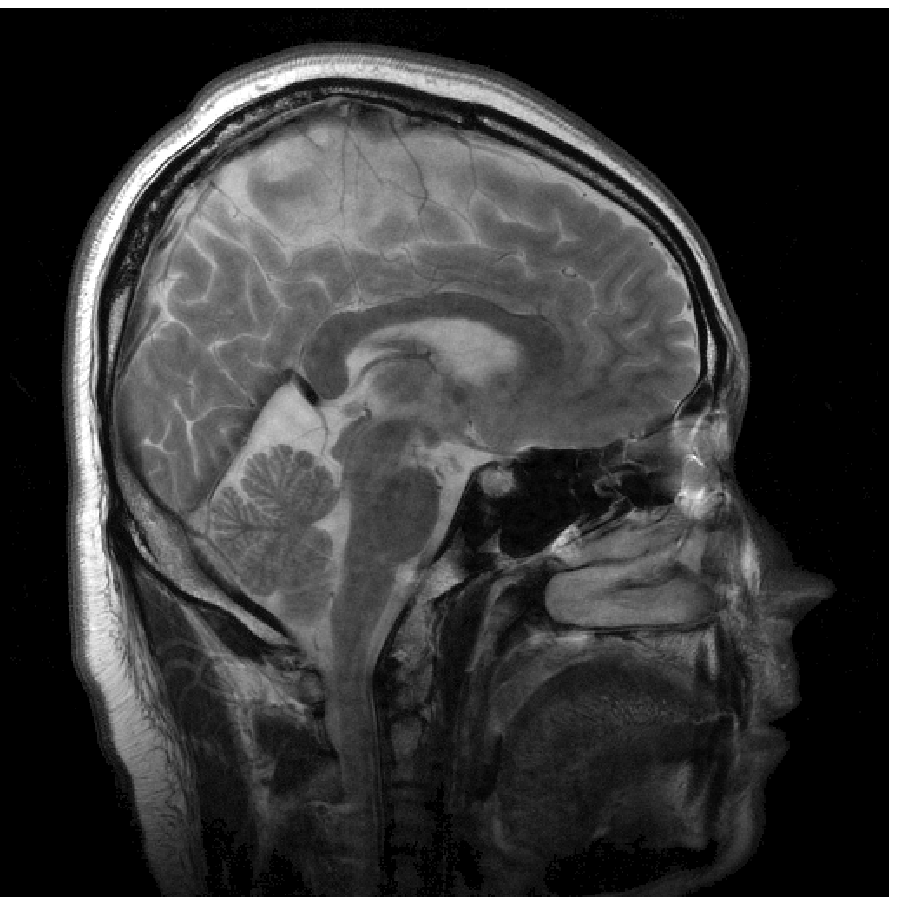}}&
{\includegraphics[width=.2\textwidth]{Figs/log-Rec-d3.eps}}
 \\
(a) Restored data1 & (b) Restored data2& (c) Restored data3\\
{\includegraphics[width=.25\textwidth]{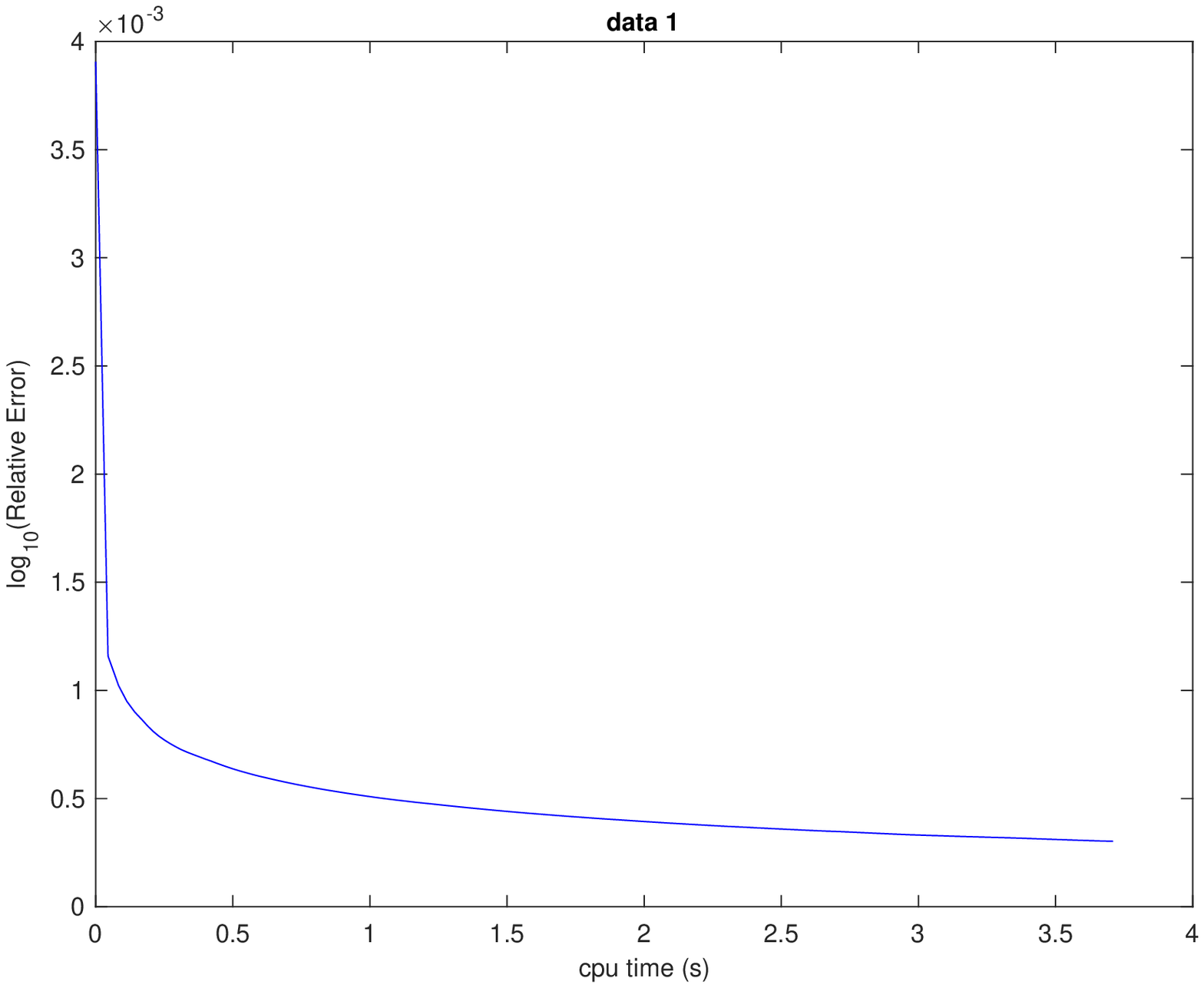}} &
{\includegraphics[width=.25\textwidth]{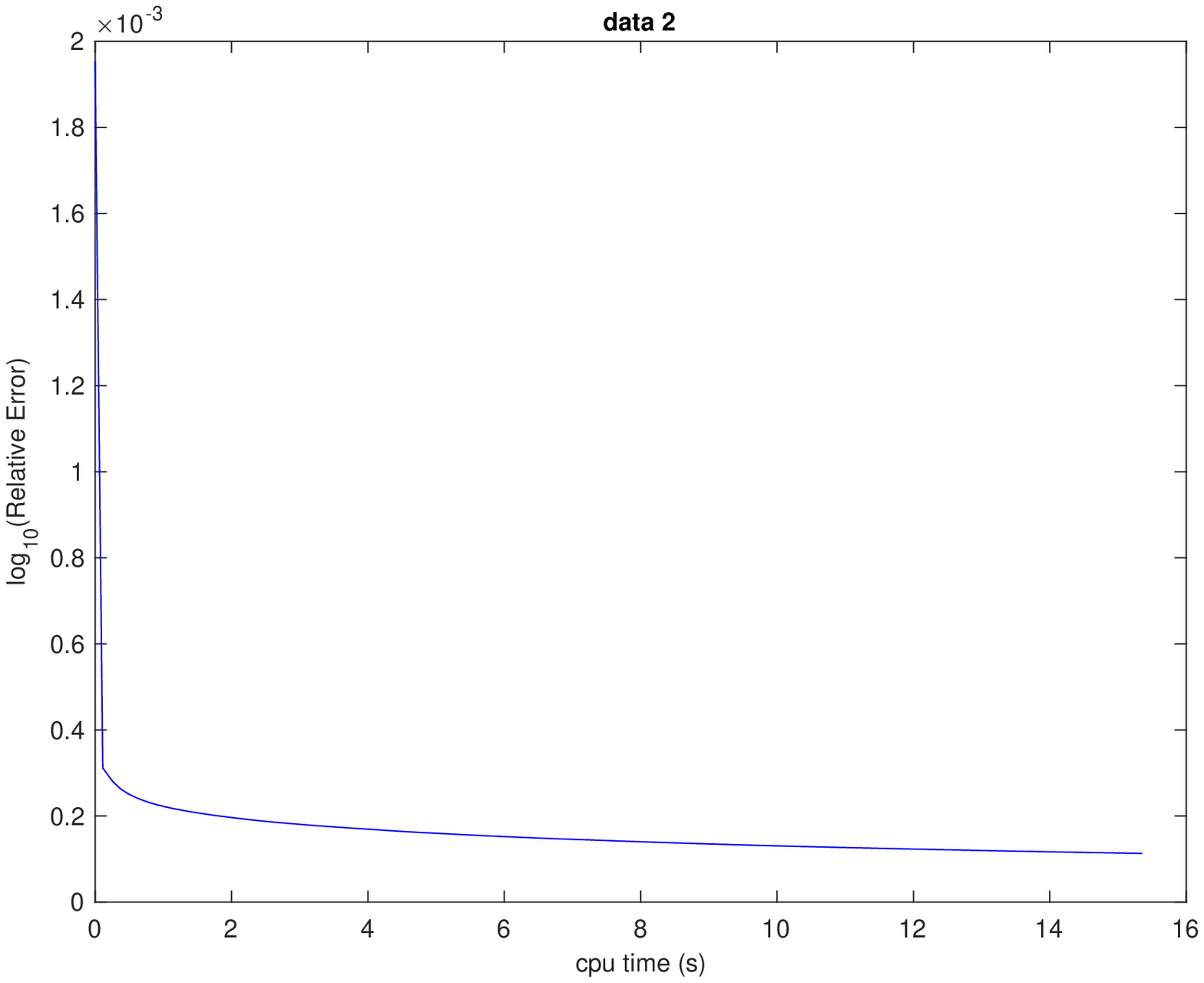}} &
{\includegraphics[width=.25\textwidth]{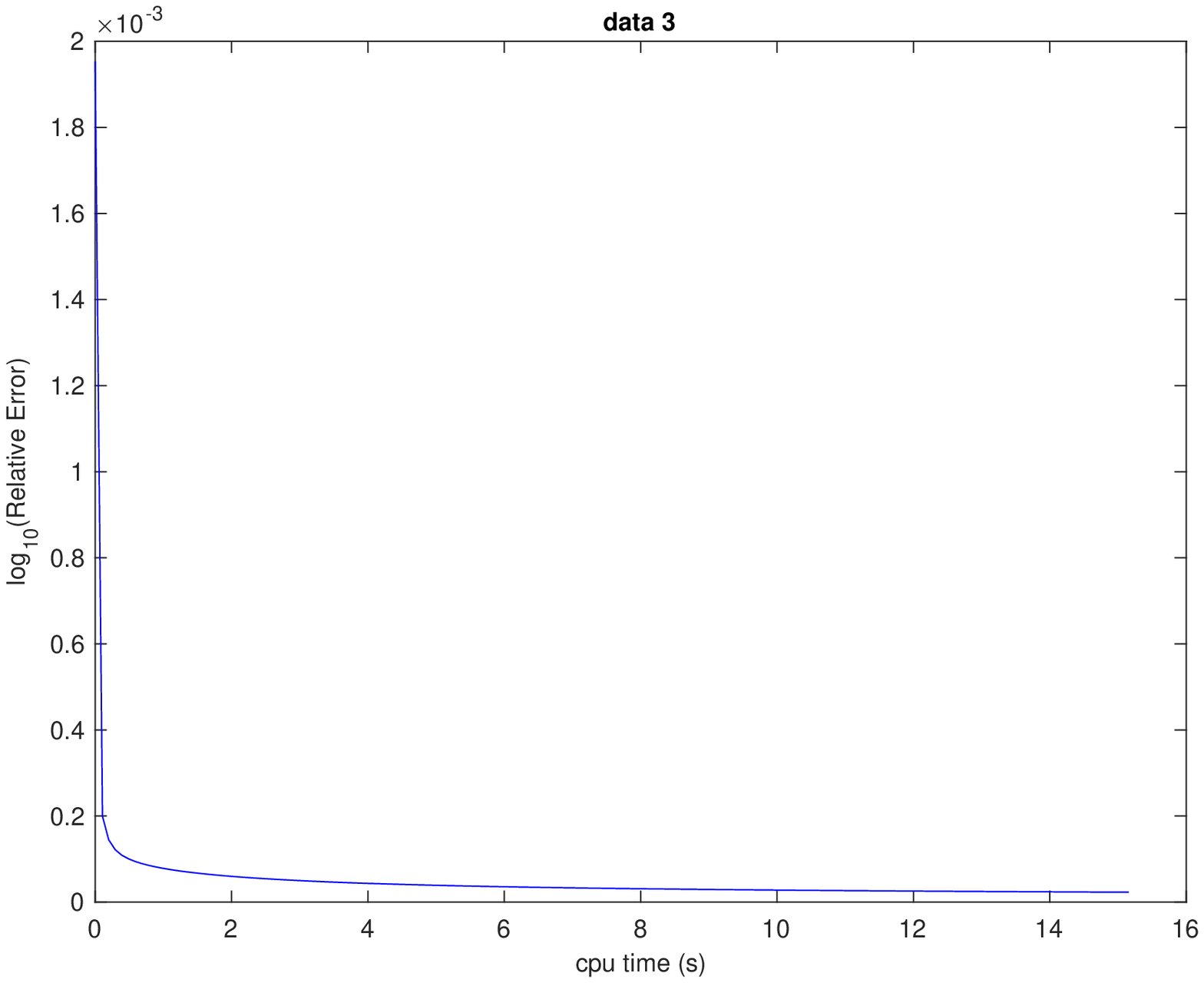}} \\
(d)&(e)&(f)\\
\end{tabular}
\end{center}
  \centering
  \begin{minipage}[b]{1.0\linewidth}
    \begin{center}
    \begin{tabular}{ccccc} 
\toprule
Given image  & SNR & PSNR & RelErr& CPU \\\midrule
(a) &26.26&40.73& 3.0236e-04&3.71\\
(b) &30.24&42.60&1.0452e-04&18.44\\
(c) &42.13&64.67&2.0681e-05&18.96\\
 \bottomrule
    \end{tabular}
    \end{center}
      \end{minipage}
\caption {[$\ell_p$, $p=0.5$ results]
The results obtained by the CPALM method for 
solving  the log-sum penalization model (\ref{lp-prob}). 
(a), (b), and (c) demonstrates the reconstruction results. 
We observe that SNR and PSNR improved significantly.
The image relative Error versus CPU (sec.) time are also shown  in 
(d), (e), (f). We observe that the trajectories are decreasing.}
\label{Fig:lp} 
\end{figure}
We can conclude that the proposed approach provides a good alternative 
to the class of PALM  method, in terms of both quality of reconstruction
and convergence speed.

\section{Concluding Remarks}\label{concludingremarks}
In this paper we proposed a new method, called CPALM, motivated by 
PALM \cite{PALM14}, to solve a class of composite 
nonconvex nonsmooth optimization problems.
The challenge arises from the fact that the nonsmooth term in the objective function
is a composition between a strictly increasing, concave,
differentiable function and a convex nonsmooth function.
To overcome the difficulty, we replace this term by an appropriate majorant function.
Theoretically, we proved that the CPALM method converges to a critical point 
of the problem and when the objective function satisfies the
KL property, the sequence generated by the CPALM method has a finite length.
Numerically, we applied the CPALM method to solve 
parallel MRI image reconstruction problems with nonconvex and nonsmooth log-sum as well as 
$\ell_p^p$ regularizations. The obtained results demonstrate the effectiveness of the proposed method.
In the future, we are interested in a accelerated version of CPALM by exploiting
the so-called heavy ball method of  Polyak \cite{polyak64}.

\bibliographystyle{siam}
\bibliography{library.bib}

\end{document}